\tikzset{>=latex}
\theoremstyle{plain}
\newtheorem{theorem}{Theorem}
\newtheorem{lemma}[theorem]{Lemma}
\newtheorem{proposition}[theorem]{Proposition}
\newcommand{\defn}{\emph}
\title{Small sums of five roots of unity}
\author{Ben Barber}
\address{University of Manchester and Heilbronn Institute for Mathematical Research, UK.}
\begin{document}

\begin{abstract}
Motivated by questions in number theory, Myerson asked how small the sum of 5 complex $n$th roots of unity can be.
We obtain a uniform bound of $O(n^{-4/3})$ by perturbing the vertices of a regular pentagon, improving to $O(n^{-7/3})$ infinitely often.

The corresponding configurations were suggested by examining exact minimum values computed for $n \leq 221000$.
These minima can be explained at least in part by selection of the best example from multiple families of competing configurations related to close rational approximations.
\end{abstract}

\maketitle

\section{Introduction}

Motivated by questions in number theory, Myerson~\cite{myerson} asked the following: 
what is the smallest non-zero absolute value of a sum of $k$ complex $n$th roots of unity?  
Call this minimum $f(k,n)$.
Myerson's best bounds for large $k$ and $n$ have the shape
\begin{equation} \label{general-bounds}
k^{-n} \leq f(k,n) \leq n^{-k/4 + o(1)},
\end{equation}
with the upper bound valid only when both $k$ and $n$ are even.
Tao~\cite{MO} asked the same question on MathOverflow, indicating similar bounds.

The upper bound begins with a maximum-sized set $S$ of $(n/2)$th roots linearly independent over $\mathbb Q$.
The sums of $k/2$ elements of $S$ are distinct and contained within a disc of radius $k/2$, so by the pigeonhole principle two of the sums must be close;
this small difference is a small sum of $k$ roots since $-1$ is an $n$th root when $n$ is even.
The lower bound uses the observation that a sum of $k$ roots of unity is an algebraic integer, so the product of its at most $n$ conjugates, each of absolute value at most $k$, is an integer.
For $n$ prime and distinct roots Konyagin and Lev~\cite{konyagin-lev} added a Fourier perspective to improve the lower bound to $k^{-n/4}$.
No improvements have been made to either the general lower or upper bounds since.

The cases $k \leq 4$ can be treated exactly because they have limited degrees of freedom.
Myerson describes geometric arguments in~\cite{myerson}.
We give a more detailed presentation in Section~\ref{sec:k-leq-4}, both to illustrate some ideas that we will use later and to correct one of Myerson's values for $f(3,n)$.

This leaves $k=5$ of particular interest, as it is susceptible to neither exact analysis nor naive pigeonhole arguments.
Myerson wrote (using $N$ for what we have called $n$) that `Choosing roots near the vertices of the regular $k$-gon will never result in a non-zero sum smaller in magnitude than $c_kN^{-1}$ for some constant $c_k$; nevertheless, we know of no general construction better than this.'
The main result of this paper is that this is too pessimistic: careful perturbation of the fifth roots of unity can always produce a sum of size $O(n^{-4/3})$.

\begin{theorem}\label{thm:four-thirds}
There is an absolute constant $C > 0$ such that,
for every $n \in \mathbb N$, $f(5,n) \leq Cn^{-4/3}$.
\end{theorem}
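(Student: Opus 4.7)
The plan is to improve the naive pentagon perturbation by introducing a second-layer correction via Diophantine approximation in $\mathbb{Z}[\omega]$, where $\omega=e^{2\pi i/5}$. For any integers $a_0,\ldots,a_4$ set $\zeta_j=e^{2\pi i a_j/n}$ and $\psi_j=a_j/n-j/5$, so that $\zeta_j=\omega^j e^{2\pi i\psi_j}$. Since $\sum_j\omega^j=0$, Taylor expansion gives
\[
\sum_{j=0}^4\zeta_j=2\pi i\sum_j\psi_j\omega^j-2\pi^2\sum_j\psi_j^2\omega^j+O\bigl(\textstyle\max_j|\psi_j|^3\bigr).
\]
Choosing each $a_j$ to be the nearest integer to $nj/5$ gives $|\psi_j|\le 1/(2n)$ but leaves a linear term of typical size $\Theta(1/n)$; this is Myerson's obstruction.

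To beat the rate $1/n$, I would allow $a_j$ to differ from the nearest integer to $nj/5$ by a further integer $m_j$. Writing $r_j=\{nj/5\}_*$ for the signed fractional part, so $n\psi_j=m_j-r_j$, the linear term becomes
\[
\sum_j\psi_j\omega^j=\frac{1}{n}\Bigl(\sum_j m_j\omega^j-T\Bigr),\qquad T:=\sum_j r_j\omega^j\in\mathbb{C}.
\]
The target $T$ depends only on $n\bmod 5$ and satisfies $|T|=O(1)$; the task is to choose integers $m_j$ so that $\sum_j m_j\omega^j$ lies within distance $O(1/M)$ of $T$ while $\max_j|m_j|\le CM$, for a parameter $M$ to be optimised.

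The key ingredient is a Diophantine approximation lemma in $\mathbb{Z}[\omega]$: for every $T\in\mathbb{C}$ with $|T|=O(1)$ and every $M\ge 1$, such integers $m_0,\ldots,m_4$ exist. This follows from Minkowski's theorem applied to the Minkowski embedding $\mathbb{Z}[\omega]\hookrightarrow\mathbb{C}^2$ via the two non-conjugate complex embeddings $\omega\mapsto e^{2\pi i/5}$ and $\omega\mapsto e^{4\pi i/5}$: the image is a rank-four lattice of bounded covolume in $\mathbb{R}^4$, and the box $\{|w_1|\le C/M,\,|w_2|\le CM\}$ has volume $\pi^2 C^4$ independent of $M$, which for $C$ sufficiently large ensures that every translate of the box contains a lattice point. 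Applied to the translate centred at $(T,0)$, this produces an $\alpha=\sum m_j\omega^j$ with $|\alpha-T|=O(1/M)$ and $|m_j|=O(M)$ for all $j$ (the coefficient bound following from the boundedness of both embeddings).

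Applying the lemma with $M=n^{1/3}$ gives $|\sum_j\psi_j\omega^j|=O(n^{-4/3})$ and $\max_j|\psi_j|=O(n^{-2/3})$, so both the linear and quadratic terms in the expansion are $O(n^{-4/3})$ while the cubic tail is $O(n^{-2})$; the exponent $4/3$ is the exponent that balances the decreasing linear error against the increasing quadratic error. A remaining subtlety is ensuring the sum is genuinely nonzero: when $T\ne 0$ the approximation error is small but generically nonzero, while when $T=0$ (essentially the case $n\equiv 0\pmod 5$) the homogeneous form of Minkowski's theorem instead supplies a nonzero $\alpha\in\mathbb{Z}[\omega]$ of small size to perturb by. The main technical obstacle is proving the approximation lemma cleanly with explicit constants and verifying that the coefficient bounds on the $m_j$ can indeed be derived from the two-embedding size control.
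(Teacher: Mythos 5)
Your strategy mirrors the paper's (Taylor-expand about the pentagon, reduce to Diophantine approximation, balance the linear term against the quadratic at $M = n^{1/3}$) but in a more general setting: you keep all five perturbation parameters and work in $\mathbb{Z}[\omega]$, rather than restricting to the symmetric family $z_5(\alpha,\beta)$, which makes the sum real and reduces the linear obstruction to the single number $\phi$. The congruence bookkeeping that in the paper lives in Lemma~\ref{fibonacci} is absorbed into your target $T$. This is a genuinely different and arguably more natural formulation; but the whole weight of the argument then rests on your approximation lemma, and the justification you give for it is wrong.

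Bounded volume does not imply that every translate of a convex body contains a lattice point: the rectangle $[-R,R]\times[-1/R,1/R]$ has area $4$ for every $R$, yet for $R>2$ no $\mathbb{Z}^2$-translate of it contains $(0,1/2)$. Minkowski's first theorem handles only the homogeneous case, which is your $T=0$ branch; for translates you need a bound on the covering radius, and a covolume bound alone cannot give that for boxes of unbounded aspect ratio. What actually makes the lemma true is the arithmetic of $\mathbb{Z}[\omega]$: for $0\neq\alpha\in\mathbb{Z}[\omega]$ the norm satisfies $|N(\alpha)|=|\sigma_1(\alpha)|^2|\sigma_2(\alpha)|^2\ge1$, so the first successive minimum of $\sigma(\mathbb{Z}[\omega])$ with respect to the box $\{|w_1|\le1/M,\ |w_2|\le M\}$ is at least $1$ uniformly in $M$; Minkowski's \emph{second} theorem then bounds all four successive minima above, and hence bounds the covering radius, which is the statement you need. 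Equivalently, the rank-one unit group of $\mathbb{Z}[\omega]$ rescales $\sigma_1$ and $\sigma_2$ by reciprocal factors and so carries a covering box of aspect ratio one to a covering box of any prescribed aspect ratio; this unit-group action is exactly what the Fibonacci numbers implement in the paper's Lemma~\ref{fibonacci}, since $\phi^m = F_m\phi + F_{m-1}$ and $\phi$ is the fundamental unit. You need one of these inputs; without it the lemma, and hence the theorem, is unproved. Two minor points: ``generically nonzero'' does not rule out $z=0$ (the clean argument is via the classification of minimal vanishing sums quoted in Section~\ref{sec:two-and-three}: a small perturbation of the regular pentagon can only vanish if it is again a rotated pentagon, which forces the linear term to vanish too); and the bound $|m_j|=O(M)$ needs the normalisation $m_4=0$ via $\sum_j\omega^j=0$ before the two embeddings determine the coefficients.
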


Moreover, for infinitely many $n$ the same construction achieves much more.

\begin{theorem}\label{thm:beat2}
There is an absolute constant $C > 0$ such that,
for infinitely many $n \in \mathbb N$, $f(5,n) \leq Cn^{-7/3}$.
\end{theorem}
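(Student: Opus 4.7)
The plan is to reuse the construction from Theorem~\ref{thm:four-thirds} and specialise to those $n$ for which it enjoys additional cancellation. In that construction one perturbs the vertices of the regular pentagon by angles $\theta_j = 2\pi b_j/n$ with $b_j \in \mathbb{Z}$ and studies
\[
S = \sum_{j=0}^{4} \omega^j e^{i\theta_j} = \sum_{k \geq 1} \frac{i^k}{k!}\, M_k, \qquad M_k := \sum_{j=0}^{4} \omega^j \theta_j^k,
\]
where $\omega = e^{2\pi i/5}$. The four real degrees of freedom available (the five $\theta_j$ modulo a global rotation) are used to force $M_1$ and $M_2$ to be small; the uniform $O(n^{-4/3})$ rate in Theorem~\ref{thm:four-thirds} comes from balancing the higher Taylor terms against the discretisation error arising when one restricts $\theta_j$ to $\tfrac{2\pi}{n}\mathbb{Z}$.

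The first step is to isolate an arithmetic quantity $Q(n)$ whose size drives that discretisation error. I expect $Q(n)$ to measure the defect in a simultaneous rational approximation, with denominator $n$, of a fixed tuple of algebraic numbers built from $\cos(2\pi/5)$ and $\sin(2\pi/5)$ (equivalently, the real and imaginary parts of the vector $(\omega, \omega^2, \omega^3, \omega^4)$). The proof of Theorem~\ref{thm:four-thirds} uses only the generic bound $|Q(n)| = O(1)$.

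The second step is to exhibit infinitely many $n$ with $|Q(n)| = O(n^{-1})$. The cosine coordinates of $\omega^j$ lie in the real quadratic field $\mathbb{Q}(\phi)$ with $\phi = 2\cos(\pi/5) = (1+\sqrt 5)/2$ the golden ratio, and $\phi$ has bounded partial quotients, so denominators of the convergents of $\phi$ (the Fibonacci numbers) --- or of a suitable algebraic number in $\mathbb{Q}(\omega)$ --- furnish $n$ along which the relevant approximations saturate at the one-dimensional rate. Rerunning the final optimisation in the proof of Theorem~\ref{thm:four-thirds} with $|Q(n)| = O(n^{-1})$ in place of the trivial bound saves a factor of $n^{-1}$ in the discretisation error and propagates to the claimed $O(n^{-7/3})$ for these $n$.

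The main obstacle is pinning down $Q(n)$ cleanly. The proof of Theorem~\ref{thm:four-thirds} combines a Taylor expansion, a choice of scale parameter, and one or more approximation steps, and extracting a single arithmetic quantity whose improvement produces exactly the right extra power of $n$ is the delicate part. The computational observation highlighted in the abstract --- that the record minima are explained by competition between families tied to close rational approximations --- is strong evidence that such a $Q(n)$ exists and takes essentially the form proposed above, so most of the work lies in making this quantitative rather than in conceiving the strategy.
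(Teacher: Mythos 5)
Your overall instinct---reuse the Theorem~\ref{thm:four-thirds} construction and specialise to $n$ enjoying extra cancellation---is right, but the specific mechanism you propose cannot work, and the paper explicitly flags why at the start of its Section~5. You plan to find infinitely many $n$ for which the linear-term ``discretisation error'' is $O(n^{-1})$ rather than $O(1)$. In the actual construction the linear term is proportional to $|a\phi + b|/n$ for integers $a,b$ of size roughly $n^{1/3}$, and the golden ratio is \emph{badly approximable}: $|a\phi + b| \geq c/|a|$ for every non-zero integer pair (Hurwitz, or just the continued-fraction expansion $[1;1,1,\dots]$). There is no sequence of $n$ along which this floor drops; the Fibonacci convergents \emph{achieve} the $1/|a|$ rate but never beat it. So the arithmetic quantity you want to make unusually small is pinned from below, uniformly in $n$. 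Moreover, even granting your hypothesis $|Q(n)| = O(n^{-1})$, the resulting optimisation $\frac{1}{an^2} + \frac{a^2}{n^2}$ is minimised at $a=1$ and only gives $O(n^{-2})$, not $O(n^{-7/3})$, so the claimed propagation would not close either.

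The paper's actual route is genuinely different. Fix $(a,b) = (F_{2j-1}, -F_{2j})$ and expand $z_5(a/n,b/n)$ to \emph{third} order. Clearing $n^3$, the first three Taylor terms become a quadratic polynomial $g(n) = An^2 + Bn + C$ in $n$ with $A = \Theta(1/a) > 0$, $B = \Theta(a^2) < 0$, $C = \Theta(a^3) > 0$. This quadratic has a positive real root $n_0 = \Theta(a^3)$; taking $n$ to be the nearest admissible integer to $n_0$ forces $g(n) = O(a^2)$, so $z_5 = O(a^2/n^3) + O(a^4/n^4) = O(n^{-7/3})$ because $a = \Theta(n^{1/3})$ automatically. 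In other words, the gain comes from choosing $n$ so that the linear, quadratic and cubic Taylor terms \emph{mutually cancel}, not from finding $n$ where the linear term alone is unusually small. The Fibonacci numbers appear because they give a clean one-parameter family of $(a,b)$ with $a\phi+b$ of the right sign and size, not because their denominators furnish special $n$.
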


We prove Theorems~\ref{thm:four-thirds} and~\ref{thm:beat2} in Sections~\ref{sec:speed-run},~\ref{sec:pentagons} and~\ref{sec:best}.

For small even $k$, Myerson improved the upper bound in \eqref{general-bounds} to $n^{-k/2}$ using known cases of the Prouhet--Tarry--Escott problem, which asks for non-degenerate integer solutions to
\[
a_1^j + \cdots + a_m^j = b_1^j + \cdots + b_m^j \qquad \text{for } 0 \leq j \leq m-1.
\]
These lead to short sums for even $n$ by looking at the Taylor expansion of
\begin{equation}\label{pte-construction}
e(a_1/n) + \cdots + e(a_m/n) - e(b_1/n) - \cdots - e(b_m/n), 
\end{equation}
where we write $e(x) = \exp(2\pi i x)$ and take $k=2m$.
The known solutions, for $1 \leq m \leq 10$ and $m = 12$, are listed at~\cite{pte}.

Both \eqref{pte-construction} and Theorem~\ref{thm:four-thirds} obtain short sums by perturbing a set of roots which sum to exactly $0$.
In Section~\ref{sec:two-and-three} we discuss what can be achieved by perturbing other sets of roots which sum to zero.
We make no concrete improvements to Theorems~\ref{thm:four-thirds} or~\ref{thm:beat2} but present some alternative approaches which are more likely to lead to further progress.

\begin{figure}
\input{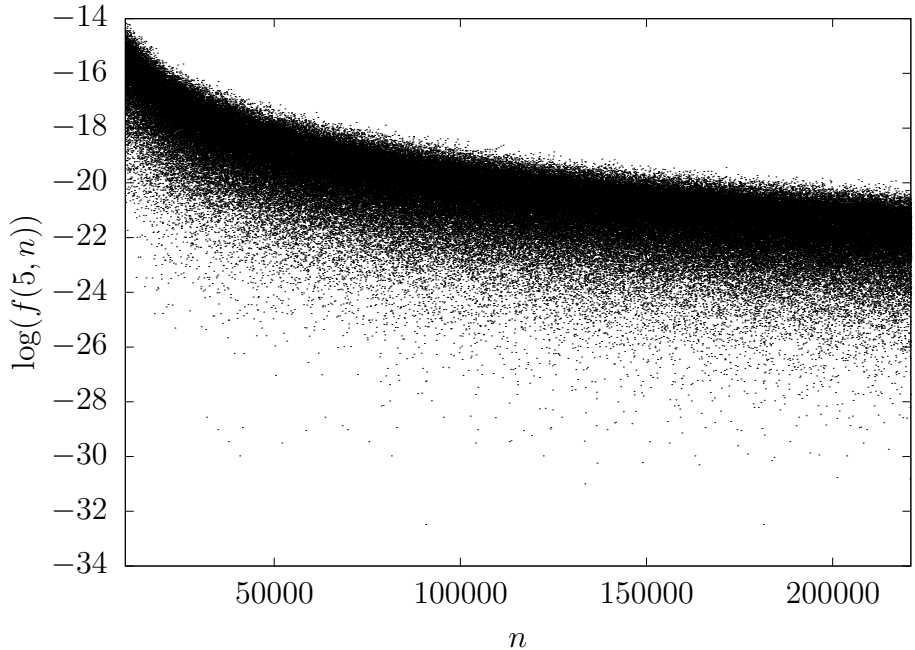}
\caption{$\log(f(5,n))$ for $10000 \leq n \leq 221000$}
\label{fig:all}
\end{figure}

\begin{figure}
\input{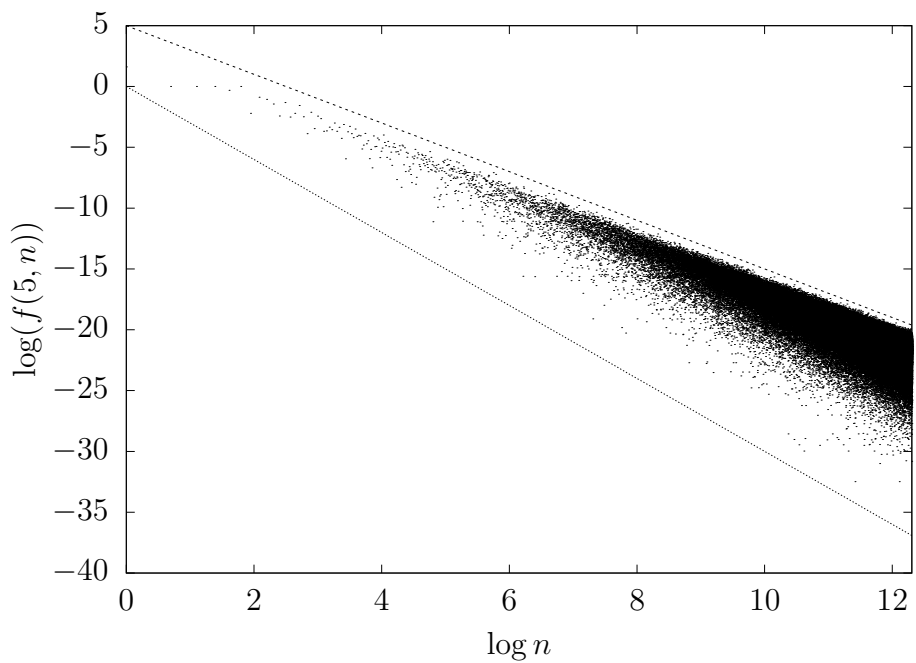}
\caption{log-log plot of $f(5,n)$ for $1 \leq n \leq 221000$ with lines of slope $-2$ and $-3$}
\label{fig:log}
\end{figure}

The starting point for this work was computation of large numbers of exact values of $f(5,n)$.
These have now been calculated for all $n \leq 221000$; see Figures~\ref{fig:all} and~\ref{fig:log} for an overview, and Section~\ref{sec:computation} for comments on how the values were obtained.
It would be bold to conjecture that either of $C/n^2$ or $c/n^3$ are upper or lower bounds respectively based on these data.

\begin{figure}
\subcaptionbox{$\log(f(5,n))$ for $10000 \leq n \leq 221000$, $n$ divisible by $6$}[.475\textwidth]{\scalebox{0.5}{\input{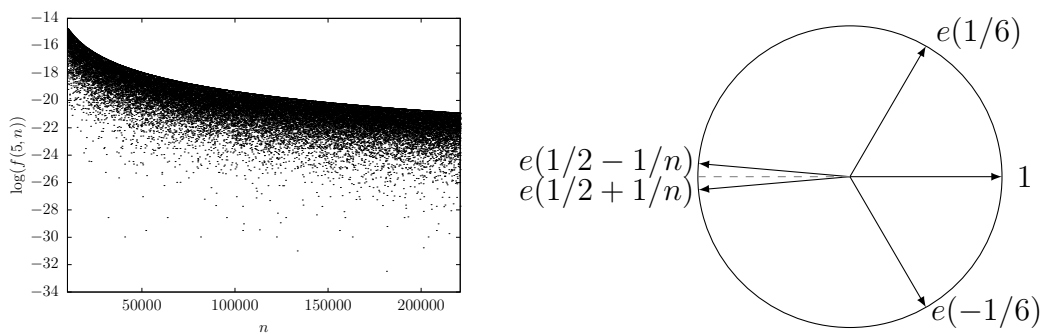}}}
\subcaptionbox{Lift of the optimal configuration for $k=4$.}[.475\textwidth]{
\begin{tikzpicture}[scale=2]
\draw (0,0) circle (1);

\foreach \t in {60,-60, 0, 175, -175} 
  \draw[->] (0,0) -- (\t:1);

\foreach \t in {180}   
  \draw[dashed,gray] (0,0) -- (\t:1);

\draw (0.85,0.95) node {$e(1/6)$};
\draw (0.9,-0.9) node {$e(-1/6)$};
\draw (-1.6,-0.1) node {$e(1/2+1/n)$};
\draw (-1.6,0.1) node {$e(1/2-1/n)$};
\draw (1.15,0) node {$1$};
\end{tikzpicture}
}
\caption{An upper bound when $6$ divides $n$ and the configuration responsible.}
\label{fig:0mod6}
\end{figure}

There is visibly a lot of variation in $f(5,n)$, with values differing by as much as a factor $e^{14} > 10^6$ for nearby $n$.
We will show that these plots conceal a great deal of structure.
For example, a distinct line is visible near the top of the cloud of points in Figure~\ref{fig:all}.
This becomes very clear if we restrict to those $n$ divisible by $6$ (Figure~\ref{fig:0mod6}).
In this case, configurations of $k$ roots can be lifted to configurations of $k+1$ roots using the fact that  $1 = e(1/6) + e(-1/6)$, so the top edge of this plot is the lift of the optimal configuration for $k=4$ (see Proposition~\ref{prop:small-k}(c)) to $k=5$.
It is possible to do better than this configuration, and $f(5,n)$ is typically smaller than $f(4,n)$, but since it is always available it provides an upper bound.
Throughout this paper we will see other families of configurations providing upper bounds in certain ranges or congruence classes of $n$.
The best picture we have of $f(5,n)$ as a whole is that of random variation within an envelope defined by these families of competing local constructions.

I would like to thank Katherine Staden for introducing me to this problem and directing me to the discussion on MathOverflow~\cite{MO}, and Jonathan Bober, Thomas Bloom, Elena Yudovina, Tom\'as Oliveira e Silva and the anonymous referee for valuable comments.

\section{Exact treatment for $k \leq 4$}\label{sec:k-leq-4}

In addition to $e(x) = \exp(2\pi i x)$, we use the less standard notation $c(x) = \cos (2\pi x)$ and $s(x) = \sin (2 \pi x)$ where this serves to clarify the presentation.

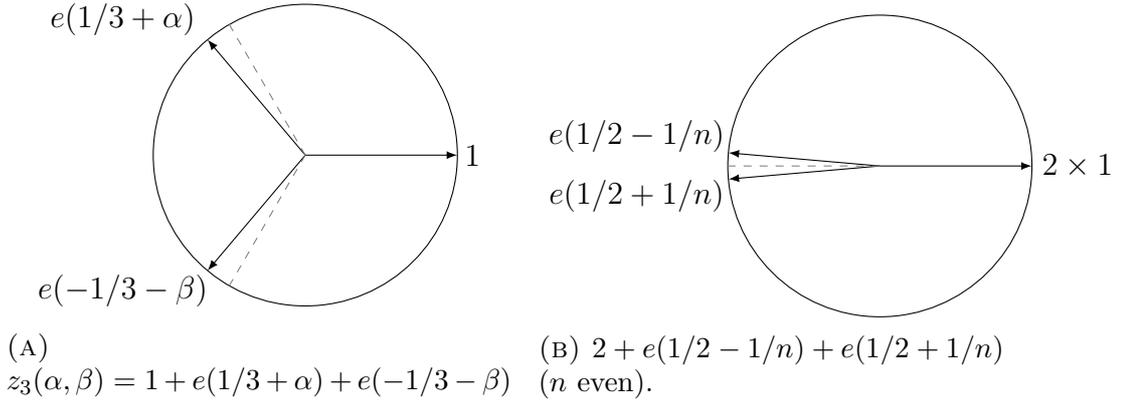
\begin{figure}
\centering
\hfill
\subcaptionbox{$z_3(\alpha,\beta) = 1+e(1/3+\alpha)+e(-1/3-\beta)$\label{fig:k=3}}[.475\textwidth]{
\centering
\begin{tikzpicture}[scale=2]
\draw (0,0) circle (1);

\foreach \t in {0,130,-130} 
  \draw[->] (0,0) -- (\t:1);

\foreach \t in {120,-120}   
  \draw[dashed,gray] (0,0) -- (\t:1);

\draw (0:1.1) node {$1$};
\draw (-1.2,0.9) node {$e(1/3+\alpha)$};
\draw (-1.2,-0.9) node {$e(-1/3-\beta)$};
\end{tikzpicture}
%
}
\hfill
\subcaptionbox{$2+e(1/2-1/n)+e(1/2+1/n)$\\ ($n$ even).\label{fig:k=4}}[.475\textwidth]{
\centering
\begin{tikzpicture}[scale=2]
\draw (0,0) circle (1);

\foreach \t in {0,175,-175} 
  \draw[->] (0,0) -- (\t:1);

\foreach \t in {180}   
  \draw[dashed,gray] (0,0) -- (\t:1);

\draw (0:1.3) node {$2\times1$};
\draw (-1.6,0.2) node {$e(1/2-1/n)$};
\draw (-1.6,-0.2) node {$e(1/2+1/n)$};

\end{tikzpicture}
%
}
\hfill
\caption{Configurations for $k=3$ and $k=4$.}

\end{figure}

\begin{proposition}\label{prop:small-k}
\begin{itemize}
\item[(a)] For all $n$,
\[
f(2,n) =
\begin{cases}
2\sin(\pi/n) & \text{if $n$ is even}, \\
2\sin(\pi/2n) & \text{if $n$ is odd.}
\end{cases}
\]

\item[(b)] For $n$ sufficiently large,
\[
f(3,n) = 
\begin{cases}
2\sin(\pi/3n) & \text{if }3|n, \\
\sqrt 3 \sin(2\pi/3n) - 2\sin^2(\pi/3n) & \text{if }n \equiv -1 \mod 3, \\
\sqrt 3 \sin(2\pi/3n) + 2\sin^2(\pi/3n) & \text{if }n \equiv 1 \mod 3.
\end{cases}
\]

\item[(c)] For $n$ sufficiently large,
\[
f(4,n) =
\begin{cases}
4\sin^2(\pi/n) & \text{if $n$ is even}, \\
4\sin^2(\pi/2n) & \text{if $n$ is odd}.
\end{cases}
\]
\end{itemize}
\end{proposition}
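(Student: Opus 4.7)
The proof treats the three parts separately, in each case pairing an explicit construction (shown in the figures) with a matching lower bound obtained from case analysis. For part (a), factoring $e(a/n)+e(b/n)=2\cos(\pi(a-b)/n)\,e((a+b)/(2n))$ reduces the task to finding the integer $d$ closest to $n/2$ but distinct from it, namely $d=n/2\pm 1$ for $n$ even and $d=(n\pm 1)/2$ for $n$ odd; substituting into $2|\cos(\pi d/n)|$ gives the two formulas.

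For part (c), the upper bound is exhibited by the configuration of Figure~\ref{fig:k=4}: for $n$ even, $1+1+e((n/2-1)/n)+e((n/2+1)/n)=2-2\cos(2\pi/n)=4\sin^2(\pi/n)$, and for $n$ odd the values $(n\pm 1)/2\in\mathbb Z$ are legitimate, so $1+1+e((n-1)/(2n))+e((n+1)/(2n))=2-2\cos(\pi/n)=4\sin^2(\pi/2n)$. For the matching lower bound I split on the multiplicity pattern of the four chosen roots. Patterns $(4)$ and $(3,1)$ give $|z|\ge 2-o(1)$, and pattern $(2,2)$ gives $|z|\ge 2f(2,n)=\Theta(1/n)$, both exceeding the $\Theta(1/n^2)$ target. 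In pattern $(2,1,1)$, rotating the doubled root to $1$ reduces the problem to minimising $|2+2\cos(\pi d/n)e(\phi)|$ over $(d,\phi)$ consistent with parity constraints on the underlying exponents, and a case check identifies the displayed construction as the unique minimiser. The remaining pattern $(1,1,1,1)$ is the main obstacle: one writes $z=P+Q$ as a sum of two pair-sums, uses the fact that any zero-sum of four unit vectors is built from two pairs with opposite partial sums, parametrises nearby configurations, and shows that any integer perturbation by $\delta_i/n$ produces a residue of magnitude at least $\Omega(1/n)$, again exceeding the target.

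For part (b) I Taylor-expand about the best approximate equilateral triangle: write the three exponents as $a_i^*+\delta_i$ with $a_i^*$ the nearest integer to $\{0,n/3,2n/3\}$ and $\delta_i\in\mathbb Z$. Using $1+e(1/3)+e(2/3)=0$, the sum becomes $(2\pi i/n)(\delta_1+e(1/3)\delta_2+e(2/3)\delta_3)+O(1/n^2)$, whose leading magnitude is $(2\pi/n)\sqrt{\delta_1^2-\delta_1\delta_2+\delta_2^2}$, the Eisenstein norm form with minimum nonzero value $1$. Inserting the best integer perturbation and re-summing exactly (not as a Taylor series) produces the stated formulas; the three residue classes $n\bmod 3$ correspond to three different initial offsets of the approximate equilateral from the exact one ($0$ when $3\mid n$, and $\mp 1/(3n)$ when $n\equiv\pm 1$), which account for the $\sqrt 3$ prefactor and the $\pm 2\sin^2(\pi/3n)$ correction. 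The matching lower bound follows from the observation that three unit vectors can sum to zero only at the equilateral configuration, so any competing sum with $|z|=o(1)$ must be a small perturbation of such an equilateral, and for $n$ sufficiently large the perturbation vector in each residue class described above is the best available. The hardest step is this last identification of the optimal $(\delta_1,\delta_2,\delta_3)$ in each residue class, together with the parallel verification in the $(1,1,1,1)$ case of part (c) that no hidden cancellation among four distinct roots can beat the $(2,1,1)$-construction.
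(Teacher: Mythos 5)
The step that fails is your dismissal of the $(1,1,1,1)$ pattern in part (c). It is not true that four \emph{distinct} $n$th roots with nonzero sum must have modulus $\Omega(1/n)$: for $n$ even take $1$, $e(1/n)$, $e(1/2-1/n)$, $e(1/2+2/n)$. The two pair sums are $2\cos(\pi/n)e(1/2n)$ and $-2\cos(3\pi/n)e(1/2n)$, so the total is $2(\cos(\pi/n)-\cos(3\pi/n))=8\pi^2/n^2+O(n^{-4})$, nonzero and of the same order as the claimed minimum $4\sin^2(\pi/n)\approx 4\pi^2/n^2$; an analogous all-distinct example with exponent gaps $1$ and $2$ gives about $3\pi^2/n^2$ for odd $n$ against the claimed $\pi^2/n^2$. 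So the all-distinct case competes at order $n^{-2}$ and can only be eliminated by comparing constants, which your argument never does. The paper does not case-split on multiplicities at all: it writes the two pair sums as $2\cos(\pi(a-b)/n)e((a+b)/2n)$ and $2\cos(\pi(p-q)/n)e((p+q)/2n)$, shows they must point in exactly opposite directions (otherwise the angle at $0$ is at least $\pi/n$ and $|z|\geq\sqrt 2\sin(\pi/n)$, already too big), extracts from the opposite-direction requirement the parity constraint $a-b\equiv p-q+n \pmod 2$, and minimises $2|\cos(\pi(a-b)/n)-\cos(\pi(p-q)/n)|$ subject to it, obtaining $(a-b,p-q)=(0,1)$ for odd $n$ and $(0,2)$ for even $n$. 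The doubled root is thus a \emph{conclusion} of the parity analysis, not a case hypothesis; your $(2,1,1)$ ``case check'' would need exactly this parity argument anyway, so the multiplicity split saves nothing and the essential step is missing.

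Part (a) and the upper bounds in (c) are correct and essentially the paper's. Part (b) follows the paper's route (only near-equilateral triangles can be small, then a lattice computation for the first-order term), but two points are glossed over. First, the leading magnitude is the Eisenstein norm of $(\delta_1-\delta_3)+(\delta_2-\delta_3)e(1/3)$, not of $(\delta_1,\delta_2)$ alone. Second, you never actually carry out the exact re-summation you invoke: for $3\mid n$ the minimal lattice vectors all correspond, up to rotation, to $e(1/n)+e(1/3)+e(-1/3)=e(1/n)-1$, of modulus $2\sin(\pi/n)$ --- which is what the paper's own proof computes, and which does not agree with the displayed value $2\sin(\pi/3n)$ in the statement --- so asserting that the computation ``produces the stated formulas'' in that case was not checked. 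You also need, as the paper notes, the fact that all lattice vectors of minimal length yield the same exact value (they are rotations of a single configuration), since at this precision the first-order term alone does not determine the minimiser.
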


\cite{myerson} gives the value $2\pi\sqrt 3/n + O(1/n^2)$ for $f(3,n)$ when $3|n$, which can be seen to be incorrect by considering $1 + e(1/3) + e(2/3 + 1/n)$, of length $2\pi/n + O(1/n^2)$.

The optimal configurations for parts (a) and (c) of Proposition~\ref{prop:small-k} are related, so we treat those cases first.

\begin{proof}
(a)
A sum of two $n$th roots of unity has the form
\begin{equation}\label{eqn:sum-of-2}
e(a/n) + e(b/n) = 2c((a-b)/2n))e((a+b)/2n).
\end{equation}
The absolute value of the cosine takes its minimum value non-zero when $b = 0$ and $a$ is the greatest integer less than $n/2$.
Then $e(a/n)$ and $e(0/n)$ are two almost opposite roots whose sum has length the claimed value of $f(2,n)$.

(c) With the same value of $a$, $e(0/n) + 2e(a/n) + e(2a/n)$ is a sum of four roots of length $f(2,n)^2$, establishing the upper bound for $f(4,n)$.

For the lower bound, let $z = u + v + w + x$ be a smallest non-zero sum of four $n$th roots of unity and let $n$ be sufficiently large.
Assume without loss of generality that the angle between $u$ and $v$ is at most $\pi/2$, so that $|u+v| \geq \sqrt 2$.

If $u+v$, $0$ and $w+x$ were not collinear, then the angle they formed at $0$ would be at least $\pi/n$, implying that $|z| \geq \sqrt 2 \sin (\pi/n)$, larger than the upper bound.
Thus $u+v$ and $w+x$ are on opposite sides of the same line through $0$.

Since $|u+v|\geq \sqrt 2$, we must have $|w+x| \geq \sqrt 2 - O(1/n^2)$, and so the angle between $w$ and $x$ is at most $\pi/2 + O(1/n)$.
Hence we may write
\begin{align*}
u+v & = 2c((a-b)/2n)e((a+b)/2n) \\
w+x & = 2c((p-q)/2n)e((p+q)/2n),
\end{align*}
where $0 \leq a - b \leq n/4$ and $0 \leq p-q \leq n/4 + O(1)$, and so both cosines are positive.

To ensure that $u+v$, $w+x$ are pointing in opposite directions, we require that
\[
a + b \equiv p + q + n \mod {2n},
\]
whence
\[
a + b \equiv p + q + n \mod {2},
\]
and finally
\[
a - b \equiv p - q + n \mod {2}.
\]
Now $|z| = 2|\cos(\pi(a-b)/n) - \cos(\pi(p-q)/n)|$ takes its least non-zero value when $a-b=0$, $p-q=1$.
This is consistent with the parity condition when $n$ is odd.
When $n$ is even we must instead take $a-b=0$, $p-q=2$.
Recalling that $1-\cos 2\theta  = 2\sin^2 \theta$, these values give the claimed lower bound for $f(4,n)$.

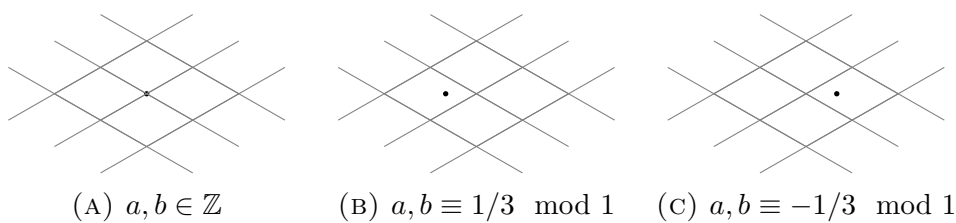
\begin{figure}
\centering
\subcaptionbox{$a, b \in \mathbb Z$\label{fig:tri}}[0.3\linewidth]{
\begin{tikzpicture}[scale=0.7]
\fill (0,0) circle (0.05);
\foreach \a in {-1, ..., 1}
{
\foreach \b in {-1, ..., 1}
{
  \draw[gray] ({(\a+\b)*cos(30)},{(\a-\b)*sin(30)}) -- ({(\a+\b+1)*cos(30)},{(\a-\b+1)*sin(30)});
  \draw[gray] ({(\a+\b)*cos(30)},{(\a-\b)*sin(30)}) -- ({(\a+\b+1)*cos(30)},{(\a-\b-1)*sin(30)});
  \draw[gray] ({(\a+\b)*cos(30)},{(\a-\b)*sin(30)}) -- ({(\a+\b-1)*cos(30)},{(\a-\b+1)*sin(30)});
  \draw[gray] ({(\a+\b)*cos(30)},{(\a-\b)*sin(30)}) -- ({(\a+\b-1)*cos(30)},{(\a-\b-1)*sin(30)});
}
}
\end{tikzpicture}
}
\subcaptionbox{$a, b \equiv 1/3 \mod 1$\label{fig:tri+1/3}}[0.3\linewidth]{
\begin{tikzpicture}[scale=0.7]
\fill (0,0) circle (0.05);
\foreach \a in {-1, ..., 1}
{
\foreach \b in {-1, ..., 1}
{
  \draw[gray] ({(\a+\b+0.6666)*cos(30)},{(\a-\b)*sin(30)}) -- ({(\a+\b+0.6666+1)*cos(30)},{(\a-\b+1)*sin(30)});
  \draw[gray] ({(\a+\b+0.6666)*cos(30)},{(\a-\b)*sin(30)}) -- ({(\a+\b+0.6666+1)*cos(30)},{(\a-\b-1)*sin(30)});
  \draw[gray] ({(\a+\b+0.6666)*cos(30)},{(\a-\b)*sin(30)}) -- ({(\a+\b+0.6666-1)*cos(30)},{(\a-\b+1)*sin(30)});
  \draw[gray] ({(\a+\b+0.6666)*cos(30)},{(\a-\b)*sin(30)}) -- ({(\a+\b+0.6666-1)*cos(30)},{(\a-\b-1)*sin(30)});
}
}
\end{tikzpicture}
}
\subcaptionbox{$a, b \equiv -1/3 \mod 1$\label{fig:tri-1/3}}[0.3\linewidth]{
\begin{tikzpicture}[scale=0.7]
\fill (0,0) circle (0.05);
\foreach \a in {-1, ..., 1}
{
\foreach \b in {-1, ..., 1}
{
  \draw[gray] ({(\a+\b-0.6666)*cos(30)},{(\a-\b)*sin(30)}) -- ({(\a+\b-0.6666+1)*cos(30)},{(\a-\b+1)*sin(30)});
  \draw[gray] ({(\a+\b-0.6666)*cos(30)},{(\a-\b)*sin(30)}) -- ({(\a+\b-0.6666+1)*cos(30)},{(\a-\b-1)*sin(30)});
  \draw[gray] ({(\a+\b-0.6666)*cos(30)},{(\a-\b)*sin(30)}) -- ({(\a+\b-0.6666-1)*cos(30)},{(\a-\b+1)*sin(30)});
  \draw[gray] ({(\a+\b-0.6666)*cos(30)},{(\a-\b)*sin(30)}) -- ({(\a+\b-0.6666-1)*cos(30)},{(\a-\b-1)*sin(30)});
}
}
\end{tikzpicture}
}
\caption{$t(a/n,b/n)$, origin marked.}
\label{fig:triangular-lattices}
\end{figure}

(b) Let $z=u+v+w$ be a smallest non-zero sum of three $n$th roots of unity and let $n$ be sufficiently large.
A perturbation of the third roots of unity shows that $f(3,n) = O(1/n)$, so $|u+v| = 1 + O(1/n)$, whence the angle between $u$ and $v$ is $2\pi/3 + O(1/n)$.
Similarly, the angles between $v$ and $w$ and between $u$ and $w$ are $2\pi/3 + O(1/n)$, so we may assume that
\[
z = z_3(\alpha, \beta) = 1 + e(1/3 + \alpha) + e(-1/3 -\beta),
\]
with $\alpha, \beta = O(1/n)$.
Expanding to first order, 
\begin{align*}
z_3(\alpha, \beta) & = 2\pi\alpha ie(1/3) - 2\pi\beta ie(-1/3) + O(1/n^2)
\\ & = -2\pi(\alpha e(1/12) + \beta e(-1/12)) + O(1/n^2).
\end{align*}
Write $t(\alpha, \beta) = \alpha e(1/12) + \beta e(-1/12)$.
When $3|n$, so that $e(1/3)$ and $e(-1/3)$ are $n$th roots of unity, the legal values of $\alpha, \beta$ are integer multiples of $1/n$.
As $a, b$ vary, $t(a/n, b/n)$ describes the vertices of a triangular lattice (Figure~\ref{fig:tri}).
The smallest non-zero points in this lattice have length $1/n$, achieved by $(a, b) \in \{(\pm 1, 0), (0, \pm 1), (\pm 1, \mp 1)\}$.
Up to rotation, all six of these choices correspond to the configuration
\begin{align*}
z & = e(1/n) + e(1/3) + e(-1/3) = e(1/n) - 1
\\ & = e(1/2n)(e(1/2n)-e(-1/2n))
\\ & = 2ie(1/2n)\sin(\pi/n),
\end{align*}
which has the claimed size.

When $n \equiv -1$ mod $3$, the legal values of $\alpha$ are $a/n+1/3n$ for integer $a$, so that $e(1/3 + a/n+1/3n) = e((n + 3a + 1)/3n)$ is an $n$th root of unity.
Similarly, the legal values of $\beta$ are $b/n+1/3n$ for integer $b$.
As $a$ and $b$ vary, $t(a/n + 1/3n, b/n + 1/3n)$ describes the vertices of an offset triangular lattice, with the origin in the centre of a right-pointing triangle of side length $1/n$ (Figure~\ref{fig:tri+1/3}).
The smallest non-zero points in this offset lattice have length $1/\sqrt 3 n$, achieved by $(a, b) \in \{(0,0), (0,-1), (-1,0)\}$.
Up to rotation, all three of these choices correspond to the configuration
\begin{align*}
1 + e(1/3 + 1/3n) + e(-1/3 - 1/3n)
   & = 1 + 2c(1/3 + 1/3n)
\\ & = 1 + 2c(1/3)c(1/3n) - 2s(1/3)s(1/3n)
\\ & = 1 - \cos(2\pi/3n) - \sqrt 3 \sin(2\pi/3n)
\\ & = 2\sin^2(\pi/3n) - \sqrt 3 \sin(2\pi/3n).
\end{align*}
Similarly, when $n \equiv 1$ mod $3$, the legal values of $\alpha, \beta$ are $a/n-1/n, b/n-1/n$ for integer $a, b$, and as $a$ and $b$ vary, $t(a/n - 1/3n, b/n - 1/3n)$ describes the vertices of an offset triangular lattice, with the origin in the centre of a left-pointing triangle of side length $1/n$ (Figure~\ref{fig:tri-1/3}).
The smallest non-zero points in this offset lattice have length $1/\sqrt 3 n$, achieved by $(a, b) \in \{(0,0), (0,1), (1,0)\}$.
Up to rotation, all three of these choices correspond to the configuration
\begin{align*}
1 + e(1/3 + 1/3n) + e(-1/3 - 1/3n)
& = 2\sin^2(\pi/3n) + \sqrt 3 \sin(2\pi/3n).\qedhere
\end{align*}
\end{proof}

With a little more work and checking a handful of cases numerically one can show that the stated values of $f(4,n)$ are correct except for $f(4,2) = 2$ and $f(4,4) = \sqrt 2$.

\section{Perturbing a regular pentagon}\label{sec:speed-run}

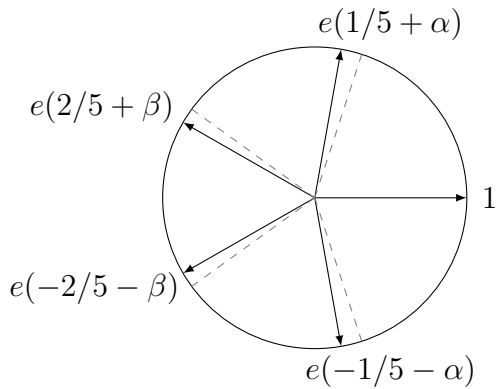
\begin{figure}
\centering

\begin{tikzpicture}[scale=2]
\draw (0,0) circle (1);

\foreach \t in {0,80,-80,150,-150} 
  \draw[->] (0,0) -- (\t:1);

\foreach \t in {72,-72,144,-144}   
  \draw[dashed,gray] (0,0) -- (\t:1);

\draw (0:1.15) node {$1$};
\draw (0.5,1.15) node {$e(1/5+\alpha)$};
\draw (0.5,-1.15) node {$e(-1/5-\alpha)$};
\draw (-1.4,0.6) node {$e(2/5+\beta)$};
\draw (-1.45,-0.6) node {$e(-2/5-\beta)$};

\end{tikzpicture}

\caption{$z_5(\alpha,\beta)$}
\label{fig:z5}
\end{figure}

We first give the simplest version of our argument demonstrating that perturbations of the regular pentagon can produce short sums when $n$ is divisible by $5$.
We restrict attention to the family of perturbations $z_5(\alpha,\beta)$ illustrated in Figure~\ref{fig:z5}, which ensures that the sum is real-valued.

We use the same basic approach as \eqref{pte-construction} of taking a Taylor expansion for our configuration.
In this section we only require an expansion to first order, but we will use higher order terms later.

We use the exact values
\[
\phi = \frac 1 {2c(1/5)} = -2c(2/5),
\]
where $\phi = \frac{1+\sqrt 5}2$ is the golden ratio.
For now this is merely a convenient simplification of notation, but we make more significant use of the appearance of $\phi$ in Section~\ref{sec:pentagons}.

\begin{lemma}\label{lem:expansion}
\begin{multline*}
z_5(\alpha,\beta) = 
- 4\pi\sin(\pi/5)[\alpha\phi+\beta] 
- 2\pi^2[\alpha^2/\phi + \beta^2\phi] \\
+ \frac{8\pi^3\sin(\pi/5)}{3}[\alpha^3\phi+\beta^3] 
+ O(\alpha^4) + O(\beta^4).
\end{multline*}
\end{lemma}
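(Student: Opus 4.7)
The plan is to directly Taylor-expand the two pairs of perturbed roots and collect terms. Because $z_5(\alpha,\beta)$ is invariant under complex conjugation, the conjugate pairs combine to give
\[
z_5(\alpha,\beta) = 1 + 2c(1/5+\alpha) + 2c(2/5+\beta),
\]
so the problem splits into two independent single-variable expansions and no mixed $\alpha\beta$ terms can appear.

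For each cosine I would apply the angle-sum identity $c(x+y) = c(x)c(y) - s(x)s(y)$ and substitute the Maclaurin expansions $c(t) = 1 - 2\pi^2 t^2 + O(t^4)$ and $s(t) = 2\pi t - \tfrac{4\pi^3}{3}t^3 + O(t^5)$. The required values of $c$ and $s$ at $1/5$ and $2/5$ are then rewritten in terms of $\phi$ and $\sin(\pi/5)$: the stated identities give $2c(1/5) = 1/\phi$ and $2c(2/5) = -\phi$, and a quick calculation from $\sin(2\pi/5) = 2\sin(\pi/5)\cos(\pi/5)$ together with $2\cos(\pi/5) = \phi$ yields $2s(1/5) = 2\phi\sin(\pi/5)$, while $\sin(4\pi/5) = \sin(\pi/5)$ gives $2s(2/5) = 2\sin(\pi/5)$.

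Substituting and grouping by order, the constant contribution is $1 + 1/\phi - \phi$, which vanishes by the defining relation $\phi^2 = \phi + 1$ (equivalently $\phi - 1/\phi = 1$); this is the only non-trivial algebraic input and reflects the fact that the unperturbed regular pentagon sums to zero. The coefficients at order $1$, $2$, and $3$ in $\alpha$ (resp.\ $\beta$) then read off as $-4\pi\phi\sin(\pi/5)$, $-2\pi^2/\phi$, and $\tfrac{8\pi^3}{3}\phi\sin(\pi/5)$ (resp.\ as the analogous expressions at $2/5$), matching the claimed formula after factoring out $\phi$ and $\sin(\pi/5)$.

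There is no conceptual obstacle; the entire argument is a controlled bookkeeping exercise. The only place where care is required is tracking the signs that come from the negative value of $c(2/5)$, and confirming that the golden-ratio identity eliminates the constant term exactly so that the lemma's expansion begins at first order.
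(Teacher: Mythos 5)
Your plan is the same as the paper's: reduce $z_5(\alpha,\beta)$ to $1 + 2c(1/5+\alpha) + 2c(2/5+\beta)$ by conjugate symmetry, apply the angle-sum formula, substitute Maclaurin series for $c$ and $s$, and invoke $\phi - 1/\phi = 1$ to kill the constant term; the identities you use to express $c(1/5), s(1/5), c(2/5), s(2/5)$ via $\phi$ and $\sin(\pi/5)$ are all correct, and that is exactly the algebra the paper performs.

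The gap is precisely at the point you yourself flag as "the only place where care is required." You assert the $\beta$-coefficients are "the analogous expressions at $2/5$" and that this "matches the claimed formula," but carrying out that sign-tracking shows otherwise. The quadratic $\beta$-contribution is $2c(2/5)\cdot\bigl(-\tfrac12(2\pi\beta)^2\bigr) = (-\phi)\cdot(-2\pi^2\beta^2) = +2\pi^2\phi\,\beta^2$, so the quadratic part of the expansion is $-2\pi^2[\alpha^2/\phi - \beta^2\phi]$, \emph{not} $-2\pi^2[\alpha^2/\phi + \beta^2\phi]$ as the lemma is printed. (The paper's own proof repeats the same misprint in its final displayed line, but the corrected sign $2\pi^2[a^2/\phi - b^2\phi]$ is what is actually used when the expansion is applied in the proof of Theorem~\ref{thm:beat2}.) So the bookkeeping you describe is the right computation, but done honestly it will not "match the claimed formula" — it will reveal a sign error in the statement of the lemma, and a correct writeup should either correct the statement or at least note the discrepancy. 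As it stands, the claim of agreement papers over the one step that actually needs checking.
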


\begin{proof}
\begin{align*}
z_5(\alpha,\beta) & = 1 + e(1/5 + \alpha) + e(-1/5 - \alpha) + e(2/5 + \beta) + e(-2/5-\beta)
\\ & = 1 + 2c(1/5 + \alpha) + 2c(2/5 + \beta)
\\ & = 1 + 2\big[c(1/5)c(\alpha) - s(1/5)s(\alpha)  + c(2/5)c(\beta) - s(2/5)s(\beta)\big]
\\ & = 1 - 2s(2/5)\left[\frac{s(\alpha)}{2c(1/5)}+s(\beta)\right] + 2[c(1/5)c(\alpha) + c(2/5)c(\beta)]
\\ & = 1 - 2s(1/10)[s(\alpha)\phi+s(\beta)] + [c(\alpha)/\phi - c(\beta)\phi]
\\ & = - 2\sin(\pi/5)\Big[\big((2\pi\alpha)-(2\pi\alpha)^3/6\big)\phi+\big((2\pi\beta)-(2\pi\beta)^3/6\big)\Big] 
\\ & \qquad\qquad - {\textstyle\frac 1 2} [(2\pi\alpha)^2/\phi + (2\pi\beta)^2\phi] + O(\alpha^4) + O(\beta^4).\qedhere
\end{align*}
\end{proof}

It is impossible to make the leading term vanish non-trivially since $\phi$ is irrational.
Instead we find unusually small leading terms using Dirichlet's theorem.

\begin{theorem}[Dirichlet]
For every $Q \in \mathbb N$ and $\xi \in \mathbb R$, there are integers $p, q$ with $1 \leq q \leq Q$ such that
\[
\left|q\xi - p\right| \leq 1/Q.
\]
\end{theorem}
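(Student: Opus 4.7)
The plan is to prove Dirichlet's approximation theorem by the classical pigeonhole argument. The idea is that among the fractional parts $\{m\xi\}$ for $m = 0, 1, \ldots, Q$, two must be close to each other, and the difference of the corresponding pair of indices together with the difference of the integer parts will give the desired rational approximation $p/q$.

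More precisely, I would consider the $Q+1$ real numbers $\{m\xi\} = m\xi - \lfloor m\xi \rfloor$ for $m = 0, 1, \ldots, Q$, each lying in the interval $[0,1)$. Partition $[0,1)$ into the $Q$ half-open subintervals $[j/Q,(j+1)/Q)$ for $0 \leq j \leq Q-1$. By the pigeonhole principle, at least two of the $Q+1$ fractional parts lie in the same subinterval: say $\{a\xi\}$ and $\{b\xi\}$ with $0 \leq a < b \leq Q$. Then
\[
|\{b\xi\} - \{a\xi\}| < 1/Q.
\]

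Setting $q = b - a$, so that $1 \leq q \leq Q$, and $p = \lfloor b\xi \rfloor - \lfloor a\xi \rfloor$, we obtain
\[
|q\xi - p| = \bigl|(b\xi - \lfloor b\xi\rfloor) - (a\xi - \lfloor a\xi\rfloor)\bigr| = |\{b\xi\} - \{a\xi\}| < 1/Q,
\]
which is stronger than the stated bound. There is no real obstacle in this argument: the only subtlety worth mentioning is that the statement allows equality in $|q\xi - p| \leq 1/Q$, which covers edge cases such as $Q\xi \in \mathbb Z$ (where one may simply take $q = Q$ and $p = Q\xi$ to achieve $|q\xi - p| = 0$). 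The strict inequality produced by half-open subintervals is therefore more than enough.
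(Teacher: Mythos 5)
Your proof is correct and is the standard pigeonhole argument; the paper does not prove Dirichlet's theorem (it cites it as classical) but explicitly notes that ``Dirichlet's theorem is proved by the pigeonhole principle,'' which is exactly the argument you give. The observation that the half-open partition yields the strict inequality $|q\xi - p| < 1/Q$, slightly stronger than the stated $\leq 1/Q$, is a correct bonus.
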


\begin{proposition}\label{speed-run}
There is an absolute constant $C$ such that $f(5,n) \leq Cn^{-4/3}$ whenever $n$ is divisible by $5$.
\end{proposition}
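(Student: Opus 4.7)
The plan is to apply Dirichlet's theorem to the linear form appearing in Lemma~\ref{lem:expansion} and to choose the Dirichlet parameter so that the linear and quadratic contributions to $z_5$ are simultaneously of size $n^{-4/3}$. Since $5\mid n$, the admissible perturbations are exactly $(\alpha,\beta)\in(1/n)\mathbb{Z}\times(1/n)\mathbb{Z}$, and $z_5(a/n,b/n)$ is a sum of five $n$th roots of unity for every $(a,b)\in\mathbb{Z}^2$.

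First I would apply Dirichlet's theorem to $\xi=\phi$ with parameter $Q=\lceil n^{1/3}\rceil$, obtaining integers $1\le q\le Q$ and $p$ with $|q\phi-p|\le 1/Q$. Setting $\alpha=q/n$ and $\beta=-p/n$, the linear term of Lemma~\ref{lem:expansion} is $-4\pi\sin(\pi/5)(q\phi-p)/n$, of magnitude at most $4\pi\sin(\pi/5)/(nQ)=O(n^{-4/3})$. Since $|q|\le Q$ and $|p|\le\phi q+1/Q=O(Q)$, we have $|\alpha|,|\beta|=O(Q/n)=O(n^{-2/3})$, so the quadratic term $-2\pi^2[\alpha^2/\phi+\beta^2\phi]$ has magnitude $O(Q^2/n^2)=O(n^{-4/3})$ and the cubic and higher terms contribute $O(Q^3/n^3)=O(n^{-2})$. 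Summing, $|z_5(\alpha,\beta)|=O(n^{-4/3})$. The exponent $-4/3$ is forced here: the Dirichlet linear bound $1/(nQ)$ and the quadratic bound $Q^2/n^2$ equalise precisely at $Q\asymp n^{1/3}$, both of size $n^{-4/3}$, and any other choice of $Q$ gives a weaker estimate.

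The one genuinely delicate point is to confirm that the sum is strictly non-zero, so that it witnesses an upper bound on $f(5,n)$. Since $\phi$ is irrational and $q\ge 1$, the factor $q\phi-p$ is non-zero, but in principle it could be exactly cancelled by the (strictly negative) quadratic term and the higher-order tail. Such a cancellation would force the five roots to form a vanishing sum of $n$th roots of unity, which is an isolated algebraic coincidence on the lattice $(1/n)\mathbb{Z}\times(1/n)\mathbb{Z}$; if the Dirichlet pair unluckily lands at such a point, replacing it by $(q,p\pm 1)$ or by a neighbouring Dirichlet approximation breaks the coincidence without affecting the $O(n^{-4/3})$ estimate. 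I expect this non-vanishing check to be the only non-trivial step, since the size estimate itself is immediate from the combination of Dirichlet and the Taylor expansion.
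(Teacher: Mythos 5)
Your size estimate is exactly the paper's argument: expand via Lemma~\ref{lem:expansion}, take $\alpha=q/n$, $\beta=-p/n$ from Dirichlet's theorem applied to $\phi$, and balance the linear term $O(1/(Qn))$ against the quadratic term $O(Q^2/n^2)$ at $Q=\Theta(n^{1/3})$; this part is correct and needs no change (the paper does not even spell out the bound $|p|=O(Q)$, which you rightly include).

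The only place you go astray is your patch for the non-vanishing issue. Replacing $(q,p)$ by $(q,p\pm 1)$ does not work: then $|q\phi-(p\pm 1)|\geq 1-1/Q$, so the linear term becomes $\Theta(1/n)$ and the $O(n^{-4/3})$ bound is destroyed, and ``a neighbouring Dirichlet approximation'' is not guaranteed to exist at comparable quality. Fortunately no patch is needed: if $z_5(q/n,-p/n)=0$ then the five $n$th roots of unity form a vanishing sum, and (as in the classification quoted in Section~\ref{sec:two-and-three}) a vanishing sum of five roots of unity is, up to rotation, either a regular pentagon or an equilateral triangle together with an antipodal pair. Since the perturbations are $O(n^{-2/3})=o(1)$, for $n$ large the latter shape is geometrically impossible, and the former would force $\alpha=\beta=0$, contradicting $\alpha=q/n\neq 0$ with $1\leq q$. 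Small $n$ are absorbed into the constant $C$. With that substitution your write-up is complete, and indeed it addresses a point the paper's own proof leaves implicit.
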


\begin{proof}
By Lemma~\ref{lem:expansion},
\[
z_5(\alpha,\beta) =-4\pi\sin(\pi/5)[\alpha\phi + \beta] + O(\alpha^2) + O(\beta^2).
\]
Since $n$ is divisible by $5$, we can write $\alpha = a/n$, $\beta = b/n$ with $a, b$ integers.
Then
\begin{align*}
|z_5(\alpha,\beta)| & = \frac{4\pi\sin(\pi/5)|a\phi + b|}n + O(a^2/n^2) + O(b^2/n^2).
\end{align*}
Let $Q$ be a positive integer to be specified later and apply Dirichlet's theorem to obtain $p, q$ such that $|q\phi-p| \leq 1/Q$ and $1 \leq q \leq Q$.
Put $a=q$ and $b=-p$ to obtain
\begin{align*}
|z_5(\alpha,\beta)| & \leq \frac{4\pi\sin(\pi/5)} {Qn} + O(Q^2/n^2).
\end{align*}
The optimal choice $Q = \Theta(n^{1/3})$ gives $|z_5(\alpha,\beta)| = O(n^{-4/3})$.
\end{proof}

Since Dirichlet's theorem is proved by the pigeonhole principle, this proof can be viewed as a precision application of the pigeonhole principle in a small part of the configuration space.

It is worth considering why this argument does not apply to the case $k=3$.
To first order, perturbation of a regular $k$-gon generates a copy of $(2\pi i/n)\cdot \mathbb Z[e(1/k)]$.
For $k=3$ this is the discrete subgroup of $\mathbb C$ we parameterised by $t(a/n,b/n)$ in the proof of Proposition~\ref{prop:small-k}, but for $k=5$ it is dense, which allows us to find non-trivial points near $0$.

A similar argument explains why the extremal configurations for $k=4$ are those shown in Figure~\ref{fig:k=4} rather than perturbations of $\{1, i, -1, -i\}$.
For perturbations of the square there is a two parameter family of ways to get back to $0$ in $(2\pi i/n)\cdot \mathbb Z[i]$, but they correspond to rotating each of $\{1, -1\}$ and $\{i, -i\}$ as pairs so vanish to all orders.
For perturbations of $\{1, 1, -1, -1\}$ as in Figure~\ref{fig:k=4} there are non-trivial ways to return to $0$ to first order, leaving $f(4,n)$ equal to the neglected quadratic term.

\section{Proof of Theorem~\ref{thm:four-thirds}}\label{sec:pentagons}

Removing the divisibility condition from Proposition~\ref{speed-run} amounts to understanding close rational approximations with congruence conditions on the numerator and denominator.
Here it is convenient that the value we seek to approximate is the golden ratio $\phi$.

Let $(F_m)$ be the Fibonacci sequence defined by $F_0 = 0$, $F_1 = 1$ and $F_m = F_{m-1} + F_{m-2}$ for $m \geq 2$.
We make repeated use of the explicit formulae
\[
F_m = \frac{\phi^m - (-\phi)^{-m}}{\sqrt 5}
\]
and
\begin{align*}
F_m\phi-F_{m+1} & = \frac{[\phi^m - (-\phi)^{-m}]\phi - [\phi^{m+1} - (-\phi)^{-(m+1)}]}{\sqrt 5}
= \frac{(-1)^{m+1}}{\phi^m}.
\end{align*}

\begin{lemma}\label{fibonacci}
There is a constant $C_0 > 1$ such that, for each $r \in \{0, \ldots, 4\}$ there are sequences of integers $(a_{j,r})_{j=0}^\infty$, $(b_{j,r})_{j=0}^\infty$ such that
\begin{gather*}
a_{j,r} \equiv r \mod 5 \\
b_{j,r} \equiv 2r \mod 5 \\
\phi^{20j}/C_0 < |a_{j,r}| < C_0\phi^{20j} \\
\phi^{20j}/C_0 < |b_{j,r}| < C_0\phi^{20j} \\
1\big/C_0|a_{j,r}| < a_{j,r}\phi + b_{j,r} < C_0/|a_{j,r}|.
\end{gather*}
\end{lemma}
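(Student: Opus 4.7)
The plan is to build the pairs $(a_{j,r}, b_{j,r})$ from Fibonacci-like integer sequences satisfying the recurrence $u_{m+2} = u_{m+1} + u_m$. Any such sequence automatically gives close rational approximations to $\phi$, mirroring the identity $F_m\phi - F_{m+1} = (-1)^{m+1}/\phi^m$ displayed just before the lemma. The idea is then to pick the initial data so that the resulting sequence has the prescribed residue modulo $5$ at a fixed arithmetic progression of indices; the Pisano period of the Fibonacci sequence modulo $5$ is $20$, which explains the exponent $\phi^{20j}$ in the statement.

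Concretely, for each $r \in \{0,1,2,3,4\}$ I would pick integer initial data $(u^{(r)}_0, u^{(r)}_1)$ with $u^{(r)}_0 \equiv r$ and $u^{(r)}_1 \equiv -2r \pmod 5$, and define $u^{(r)}_m$ by the Fibonacci recurrence. A convenient choice is $(u^{(r)}_0, u^{(r)}_1) = (r, -2r)$ for $r \in \{1,2,3,4\}$ and $(u^{(0)}_0, u^{(0)}_1) = (5, 0)$; the latter avoids the degenerate zero sequence. Writing $\psi = -1/\phi$ and using the Binet-type expansion $u^{(r)}_m = A_r\phi^m + B_r\psi^m$, one checks directly from the initial data that $A_r \neq 0$ and $B_r > 0$ in every case.

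The core identity, which generalises the one quoted in the excerpt, is
\[
u^{(r)}_m \phi - u^{(r)}_{m+1} = B_r \sqrt 5 \, \psi^m,
\]
proved by substituting the Binet expansion and simplifying. Combined with $|u^{(r)}_m| = |A_r|\phi^m + O(\phi^{-m})$, it gives both $|u^{(r)}_m| \asymp \phi^m$ and $|u^{(r)}_m \phi - u^{(r)}_{m+1}| \asymp \phi^{-m}$, with implied constants depending only on $r$. I would then set $a_{j,r} = u^{(r)}_{20j}$ and $b_{j,r} = -u^{(r)}_{20j+1}$. Since the period of any Fibonacci-like sequence modulo $5$ divides the Pisano period $20$, we get $a_{j,r} \equiv u^{(r)}_0 \equiv r$ and $b_{j,r} \equiv -u^{(r)}_1 \equiv 2r \pmod 5$. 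Moreover $\psi^{20j} = \phi^{-20j} > 0$ and $B_r > 0$, so $a_{j,r}\phi + b_{j,r} = B_r\sqrt 5 \, \phi^{-20j} > 0$, and all the two-sided bounds in the lemma follow from the Binet estimates after taking $C_0$ to be the maximum over the five residue classes of the ratios that arise.

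The main care point, rather than a genuine obstacle, is that the sign of $a_{j,r}\phi + b_{j,r}$ and the residues $(a_{j,r}, b_{j,r}) \bmod 5$ are coupled: replacing $(a_{j,r}, b_{j,r})$ by its negative would change the residues to $(-r, -2r)$ and break the congruence conditions. One must therefore choose the initial data so that the sign of $B_r$ is already the one that makes $a_{j,r}\phi + b_{j,r}$ positive, without any cosmetic sign flip. The case $r = 0$ also has to be treated separately from the outset, because the \emph{natural} initial data $(0,0)$ produces only the zero sequence.
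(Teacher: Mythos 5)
Your construction is essentially the same as the paper's: both build the pairs $(a_{j,r}, b_{j,r})$ from Fibonacci-like integer sequences, exploit the Pisano period $20$ modulo $5$, and control size and approximation quality through the Binet formula $u_m = A\phi^m + B\psi^m$ with $\psi = -1/\phi$. Your presentation is cleaner and more systematic: the single identity $u_m\phi - u_{m+1} = B\sqrt 5\,\psi^m$ replaces the paper's case-by-case bookkeeping, and it makes transparent both why the exponent is $\phi^{20j}$ and why the sign of $B$ alone governs the sign of $a\phi+b$, which is exactly the point the paper handles by shuffling indices and swapping which Fibonacci term dominates. The one genuine slip is in the $r=0$ case: with initial data $(u_0^{(0)}, u_1^{(0)}) = (5,0)$ you get $b_{0,0} = -u_1^{(0)} = 0$, which fails the strict lower bound $\phi^0/C_0 < |b_{0,0}|$. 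The fix is immediate: take $(5,5)$ instead, which is what the paper's choice $a_{j,0} = 5F_{20j+1}$, $b_{j,0} = -5F_{20j+2}$ amounts to (it is the sequence $u_m = 5F_{m+1}$), or simply reindex so that $j$ starts at $1$. You should also record, even if only in a sentence, the observation that $u_{20j}$ and $u_{20j+1}$ never vanish for any $j\ge 0$ with your chosen initial data, since the lemma's lower bounds require this; for $r\in\{1,\dots,4\}$ this follows because a zero at index $m$ would force $(-1)^m\phi^{2m} = -B_r/A_r$, and the sign and magnitude constraints rule out $m\in\{20j, 20j+1\}$.
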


\begin{proof}
We will choose $(a_{j,r})_{j=0}^\infty$, $(b_{j,r})_{j=0}^\infty$ such that there are constants $A_r$, $B_r$ and $D_r$ such that
\begin{align*}
a_{j,r} & \sim A_r \phi^{20j} \\
b_{j,r} & \sim B_r \phi^{20j}
\end{align*}
as $j \to \infty$, and
\[
a_{j,r}\phi+b_{j,r} = D_r / \phi^{20j} > 0,
\]
from which the existence of a suitable $C_0$ follows.

For $r=0$, take 
\begin{gather*}
a_{j,0} = 5F_{20j+1} \\
b_{j,0} = -5F_{20j+2}
\end{gather*}
so that
\[
a_{j,0}\phi + b_{j,0} = 5/\phi^{20j+1}.
\]

For $r \neq 0$ we first observe that, since the values of $F_0, \ldots, F_6$ mod $5$ are $0,1,1,2,3,0,3$, we have by induction that
\begin{align*}
F_{5j}   \equiv 0 \mod 5, \quad
F_{5j+1} \equiv 3^j \mod 5 \quad \text{and} \quad
F_{5j+2} \equiv 3^j \mod 5.
\end{align*}
For $r=1$, take
\begin{gather*}
   a_{j,1} = F_{20j+1} + 2F_{20j+20} \equiv 1 \cdot 1 + 2 \cdot 0 \equiv 1 \mod 5
\\ b_{j,1} = -(F_{20j+2} + 2F_{20j+21}) \equiv -(1 \cdot 1 + 2 \cdot 1) \equiv 2 \mod 5
\end{gather*}
so that
\[
a_{j,1}\phi + b_{j,1} = 1/\phi^{20j+1} - 2/\phi^{20j+20} > 0.
\]

For $r=4$ we move 10 steps along $(F_m)$ and take
\begin{gather*}
   a_{j,4} = F_{20j+11} + 2F_{20j+30} \equiv 1\cdot 3^2 \equiv 4 \mod 5
\\ b_{j,4} = -(F_{20j+12} + 2F_{20j+31}) \equiv 2\cdot 3^2 \equiv 3 \mod 5
\end{gather*}
so that
\[
a_{j,4}\phi + b_{j,4} = 1/\phi^{20j+11} - 2/\phi^{20j+30} > 0.
\]

For the remaining cases we would like to take 5 steps along $(F_m)$, but that would flip the sign of our approximations.
To correct for that we move the terms that make negative contributions further down the sequence $(F_m)$ and bring the terms that make positive contributions forward to swap their relative sizes.
Thus for $r=3$ we take
\begin{gather*}
   a_{j,3} = F_{20j+26} + 2F_{20j+5} \equiv 1\cdot3 \equiv 3 \mod 5
\\ b_{j,3} = -(F_{20j+27} + 2F_{20j+6}) \equiv 2\cdot 3 \equiv 1 \mod 5
\end{gather*}
so that
\[
a_{j,3}\phi + b_{j,3} = -1/\phi^{20j+26} + 2/\phi^{20j+5} > 0.
\]
Similarly, for $r=2$ we take
\begin{gather*}
   a_{j,2} = F_{20j+36} + 2F_{20j+15} \equiv 3\cdot 3^2 \equiv 2 \mod 5
\\ b_{j,2} = -(F_{20j+37} + 2F_{20j+16}) \equiv 1 \cdot 3^2 \equiv 4 \mod 5
\end{gather*}
so that
\[
a_{j,3}\phi + b_{j,3} = -1/\phi^{20j+36} + 2/\phi^{20j+15} > 0.\qedhere
\]
\end{proof}

\begin{proof}[Proof of Theorem~\ref{thm:four-thirds}]
We use the parameterisation $z_5(\alpha, \beta)$ with $\alpha=a/5n$, $\beta = b/5n$ and require that $a \equiv -n \mod 5$ and $b \equiv -2n \mod 5$ to ensure that $e(1/5 + \alpha)$ and $e(2/5+\beta)$ are $n$th roots of unity.
By Lemma~\ref{lem:expansion},
\begin{align*}
z_5(\alpha,\beta) & = -\textstyle\frac{4\pi\sin(\pi/5)}{5n}[a\phi + b] + O(a^2/n^2) + O(b^2/n^2).
\end{align*}

Let $r \equiv -n \mod 5$, and let $C_0$, $(a_{j,r})_{j=0}^\infty$ and $(b_{j,r})_{j=0}^\infty$ be as in Lemma~\ref{fibonacci}.
Then
\[
|z_5(a_{j,r}/5n, b_{j,r}/5n)| \leq \frac{4\pi\sin(\pi/5)C_0}{|a_{j,r}|n} + O(a_{j,r}^2/n^2) + O(b_{j,r}^2/n^2).
\]
Since $(a_{j,r})_{j=0}^\infty$ and $(b_{j,r})_{j=0}^\infty$ are geometrically distributed and their ratio is bounded, we can choose $j$ so that $|a_{j,r}|, |b_{j,r}| = \Theta(n^{1/3})$.
This choice shows that $f(5,n) = O(n^{-4/3})$, as required.
\end{proof}

We should ask whether the contortions of Lemma~\ref{fibonacci} were necessary.
The shortest answer is that the direct generalisation of Dirichlet's theorem where we impose arbitrary congruence conditions on $p$ and $q$ for arbitrary $\xi$ is false~\cite{no-extend-dirichlet}.
The weaker version in which the right-hand side is replaced by $C/q^2$, with $C$ depending on the moduli of the congruences, is true for infinitely many $q$ (see~\cite{same-modulus} for the case where $p$ and $q$ are restricted with respect to the same modulus), but this is insufficient to prove a uniform $O(n^{-4/3})$ bound. 
For a discussion of the state of the art for questions of this type see~\cite{adiceam}.

\section{Proof of Theorem~\ref{thm:beat2}}
\label{sec:best}

Lower bounds on the accuracy of rational approximations mean we can't do better than Theorem~\ref{thm:four-thirds} by working harder on the linear term.
Instead we obtain local improvements to $O(n^{-7/3})$ by choosing $n$ so that the linear and higher order terms approximately cancel.

A first attempt at this argument is to fix an $a$ and $b$ so that $|a\phi+b| \leq C_0/a$.
As before, there are non-zero constants $c_1, c_2$ such that
\[
z_5(a/n,b/n) = \frac {c_1} {an} + \frac{c_2a^2}{n^2} + \Theta\!\left(\frac{a^3}{n^3}\right).
\]
Then there is a real $n=n_0 = \Theta(a^3)$ so that the linear and quadratic terms cancel, and by careful choice of $a$ and $b$ we can ensure that $n_0$ is positive.
Let $n$ be the closest multiple of $5$ to $n_0$.
Then
\begin{equation}\label{eqn:swamp}
z_5(a/n,b/n) = O\!\left(\frac{1}{an^2}\right) + O\!\left(\frac{a^2}{n^3}\right) + \Theta\!\left(\frac{a^3}{n^3}\right) = O\!\left(\frac{1}{n^{7/3}}\right) + \Theta\!\left(\frac{1}{n^2}\right)\!, 
\end{equation}
which would be the result we're aiming for if the main term were not swamped by the error term.

Working to third order turns out to be sufficient.

\begin{proof}[Proof of Theorem~\ref{thm:beat2}]
Write $\alpha = a/n$, $\beta = b/n$.
By Lemma~\ref{lem:expansion},
\begin{align*}
z_5(\alpha,\beta) & = -\frac{4\pi\sin(\pi/5)[a\phi+b]n^2 + 2\pi^2[a^2/\phi  - b^2\phi]n - \frac 8 3\pi^3\sin(\pi/5)[a^3\phi + b^3]}{n^3}
\\& \qquad \qquad \qquad\qquad\qquad\qquad\qquad\qquad\qquad\qquad+ O(a^4/n^4) + O(b^4/n^4)
\end{align*}

Write $g(n) = An^2 + Bn + C$ for the quadratic in the numerator and
let $a = F_{2j-1}$, $b = -F_{2j}$ for some $j$.
Since $0 < a\phi + b = \Theta(1/a)$, we have for sufficiently large $j$ that
\begin{align*}
                     0 & < A = \Theta(1/a)
\\ \Theta(a^2) = B < 0 &
\\                   0 & < C = \Theta(a^3).
\end{align*}
Hence $g$ has a positive root
\begin{align*}
n_0 & = -\frac{B}{2A}\left(1 + \sqrt{1 -\frac{4AC}{B^2}}\right) = \Theta(a^3) \big(1 + \sqrt{1-\Theta(1/a^2)}\big) = \Theta(a^3).
\end{align*}
Let $n = n_0 + \gamma$ be the closest multiple of $5$ to $n_0$.
Then
\[
g(n) = g(n_0 + \gamma) - g(n_0) = A(2\gamma n_0 + \gamma^2) + B\gamma = O(a^2),
\]
so
\begin{align*}
z_5(a/n,b/n) & = O(a^2/n^3) +  O(a^4/n^4) = O(n^{-7/3}). \qedhere
\end{align*}
\end{proof}

We proved Theorem~\ref{thm:beat2} for $n$ divisible by $5$ for simplicity, but the same argument works in any congruence class mod $5$ with appropriate use of Lemma~\ref{fibonacci}.

The configurations arising in the proof of the $O(n^{-7/3})$ upper bound are not artefacts: see Figure~\ref{fig:fibonacci} for the dip in $f(5,n)$ caused by $z_5(13/n, -21/n)$.

\begin{figure}
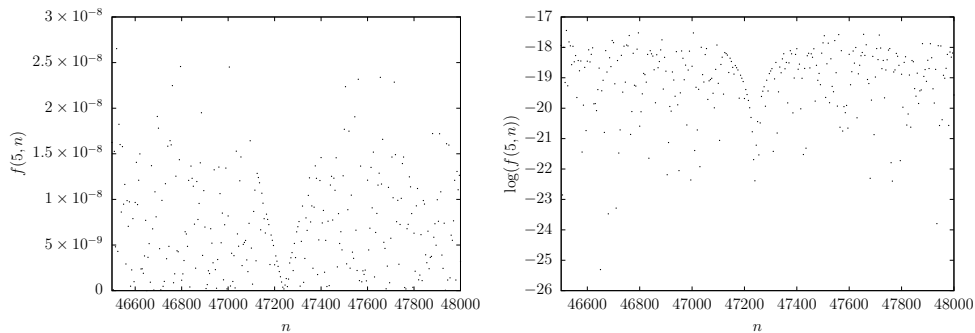

\scalebox{0.5}{\input{fibonaccilog.tex}}
\scalebox{0.5}{\input{fibonacci.tex}}
\caption{$f(5,n)$ and $\log(f(5,n))$ for $46500 \leq n \leq 48000$, $n \equiv 0 \mod 5$ showing upper bound from $z_5(13/n, -21/n)$}
\label{fig:fibonacci}
\end{figure}

\section{Perturbing other families}
\label{sec:two-and-three}

We say that a sum of $n$th roots of unity equalling $0$ is \defn{minimal} if it is not the empty sum and no non-empty proper subset of the summands sums to $0$.

\begin{theorem}[\cite{centrifuge}]
Up to rotation, the minimal sums of $n$th roots of unity equalling $0$ are either $1 + e(1/p) + \cdots + e((p-1)/p)$ for a prime $p$ dividing $n$, or have at least $(p-1)(q-1) + (r-1) \geq 6$ elements for $p < q < r$ the three smallest prime factors of $n$.
The unique sum witnessing the second bound (up to rotation) is
\[
[e(1/p) + \cdots + e((p-1)/p)][e(1/q) + \cdots + e((q-1)/q)] + e(1/r) + \cdots + e((r-1)/r).
\]
\end{theorem}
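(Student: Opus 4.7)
My plan is to prove this classification by induction on $\omega(n)$, the number of distinct prime divisors of $n$. Represent a sum of $n$th roots of unity by a polynomial $f(x) \in \mathbb Z_{\geq 0}[x]$ of degree less than $n$; the sum vanishes iff $f(\zeta_n) = 0$, equivalently $\Phi_n(x) \mid f(x)$ in $\mathbb Z[x]$. A minimal vanishing sum corresponds to a $\{0,1\}$-coefficient $f$ such that no non-empty proper sub-sum of the monomials of $f$ also vanishes at $\zeta_n$.

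For the base case $\omega(n) = 1$, write $n = p^a$ and note that $\Phi_{p^a}(x) = \Phi_p(x^{p^{a-1}})$. A direct argument in the group algebra $\mathbb Z[\mathbb Z/n\mathbb Z]$ shows that the ideal of vanishing sums is generated, as an additive subgroup, by rotations of $N_p := 1 + x^{p^{a-1}} + \cdots + x^{(p-1)p^{a-1}}$; hence every minimal sum is a rotation of $1 + \zeta_p + \cdots + \zeta_p^{p-1}$.

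For the inductive step, let $p$ be the smallest prime dividing $n$, write $p^a \parallel n$, and use the fact that $\mathbb Z[\zeta_n]$ is a free module of rank $p^{a-1}(p-1)$ over $\mathbb Z[\zeta_{n/p^a}]$, with basis given by small powers of $\zeta_{p^a}$. Given a minimal $S$, partition its summands by their residue class in $\mathbb Z/(n/p^a)\mathbb Z$ and expand to obtain coefficients $c_i \in \mathbb Z[\zeta_{n/p^a}]$ attached to each basis vector. Either one of the $c_i$ is itself the image of a vanishing sum in $\mathbb Z[\zeta_{n/p^a}]$, in which case the inductive hypothesis and minimality force $S$ to be a rotation of some $\Phi_q$ (case one of the theorem), or the $c_i$ must satisfy a non-trivial linear relation forced by $\Phi_{p^a}$, and unwinding this relation imposes a product structure on the support of $S$.

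The main obstacle is proving the sharp lower bound $(p-1)(q-1) + (r-1)$ with uniqueness. My plan is to show that any $S$ not of case one must, after rotation, contain the $(p-1)(q-1)$ roots of the form $\zeta_p^i \zeta_q^j$ with $1 \leq i \leq p-1$ and $1 \leq j \leq q-1$ all in a single $(n/pq)$-fiber; these sum to $\bigl(\sum_{i \geq 1} \zeta_p^i\bigr)\bigl(\sum_{j \geq 1} \zeta_q^j\bigr) = (-1)(-1) = 1$. To cancel this residual $1$ without introducing a proper vanishing sub-sum, $S$ must contain a further vanishing configuration contributing exactly $-1$ on a disjoint support; the cheapest such is $\zeta_r + \zeta_r^2 + \cdots + \zeta_r^{r-1} = -1$ using the third smallest prime $r$, adding $r - 1$ terms. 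The heart of the argument is ruling out alternative cancellations using only $p$ and $q$: any such alternative would either exceed $(p-1)(q-1) + (r-1)$ terms or admit a proper sub-sum equal to zero, contradicting minimality. I would formalise this by a secondary induction on the number of primes actively involved in the support of $S$, which I take to be the essence of the \emph{centrifuge} technique named in the cited paper.
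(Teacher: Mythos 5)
The paper cites this theorem from \cite{centrifuge} and does not prove it, so there is no in-paper proof to compare against; your proposal has to stand on its own.

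Your framework --- induction on $\omega(n)$, the prime-power base case via the fact that vanishing sums of $p^a$th roots of unity are $\mathbb Z$-generated by rotations of the regular $p$-gon, and the inductive step via the free-module structure of $\mathbb Z[\zeta_n]$ over $\mathbb Z[\zeta_{n/p^a}]$ --- is a sensible and standard line of attack; it is essentially the Lam--Leung machinery, which is enough to show that the \emph{length} of a minimal vanishing sum lies in $\mathbb N_0 p_1 + \cdots + \mathbb N_0 p_s$. But the steps you flag as intentions are exactly the hard content of this sharper theorem, and they are not carried out. Two concrete gaps. First, in the fiber decomposition the coefficients $c_i$ are \emph{not} automatically nonnegative sums of $n/p^a$th roots: rewriting $\zeta_{p^a}^{j}$ for $j \geq p^{a-1}(p-1)$ in terms of your chosen basis uses $\Phi_{p^a}$ and introduces minus signs, so the dichotomy ``one $c_i$ is itself a vanishing sum'' vs.\ ``the $c_i$ satisfy a non-trivial relation forced by $\Phi_{p^a}$'' is not the right split. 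Since the basis is free, in fact \emph{all} $c_i$ vanish, and each $c_i = 0$ is a possibly signed relation in $\mathbb Z[\zeta_{n/p^a}]$ whose combinatorial meaning for the support of $S$ has to be extracted with care; your outline glosses over this. Second, and more seriously, the lower bound $(p-1)(q-1)+(r-1)$ together with the uniqueness of the extremiser is the heart of the theorem, and your proposal presents it as a plan (``my plan is to show'', ``I would formalise this by a secondary induction'') rather than an argument. In particular, the structural claim that any minimal $S$ not of the first type must, after rotation, contain all $(p-1)(q-1)$ roots $\zeta_p^i\zeta_q^j$ in a single fiber is a strong assertion that does not follow from the module decomposition alone, and the step ``the cheapest further vanishing configuration contributing $-1$ on disjoint support is the $r$-gon'' silently rules out, without argument, all the other ways of writing $-1$ as a vanishing-minus-one configuration supported on $p$ and $q$ alone. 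As it stands, this is a reasonable roadmap, not a proof.
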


It follows that, apart from the regular pentagon, whose perturbations were analysed in Sections~\ref{sec:speed-run} and~\ref{sec:pentagons}, the only other way for five $n$th roots to sum to $0$ is to take the points of an equilateral triangle and two diametrically opposite points.
Figure~\ref{fig:two-and-three} shows (perturbations of) the two configurations of this type which are symmetric about the real axis.

\begin{figure}
\centering
\begin{tikzpicture}[scale=2]
\begin{scope}[shift={(-1.8,0)}]
\draw (0,0) circle (1);

\foreach \t in {0,130,-130, 80, -80} 
  \draw[->] (0,0) -- (\t:1);

\foreach \t in {120,-120, 90, -90}   
  \draw[dashed,gray] (0,0) -- (\t:1);

\draw (0:1.1) node {$1$};
\draw (-1.2,0.9) node {$e(1/3+\alpha)$};
\draw (-1.2,-0.9) node {$e(-1/3-\alpha)$};
\draw (0.3,1.15) node {$e(1/2+\beta)$};
\draw (0.3,-1.17) node {$e(-1/2-\beta)$};

\draw (0,-1.5) node {$z_{3,\pm i}(\alpha,\beta)$};

\end{scope}
\begin{scope}[shift={(1.8,0)}]
\draw (0,0) circle (1);

\foreach \t in {50,-50, 0, 175, -175} 
  \draw[->] (0,0) -- (\t:1);

\foreach \t in {180, 60,-60}   
  \draw[dashed,gray] (0,0) -- (\t:1);

\draw (1.15,0.9) node {$e(1/6+\alpha)$};
\draw (1.2,-0.9) node {$e(-1/6-\alpha)$};
\draw (-1.5,-0.1) node {$e(1/2+\beta)$};
\draw (-1.5,0.1) node {$e(1/2-\beta)$};
\draw (1.2,0) node {$1$};

\draw (0,-1.5) node {$z_{3,\pm 1}(\alpha,\beta)$};
\end{scope}
\end{tikzpicture}
\caption{Two more families of configurations.}
\label{fig:two-and-three}
\end{figure}
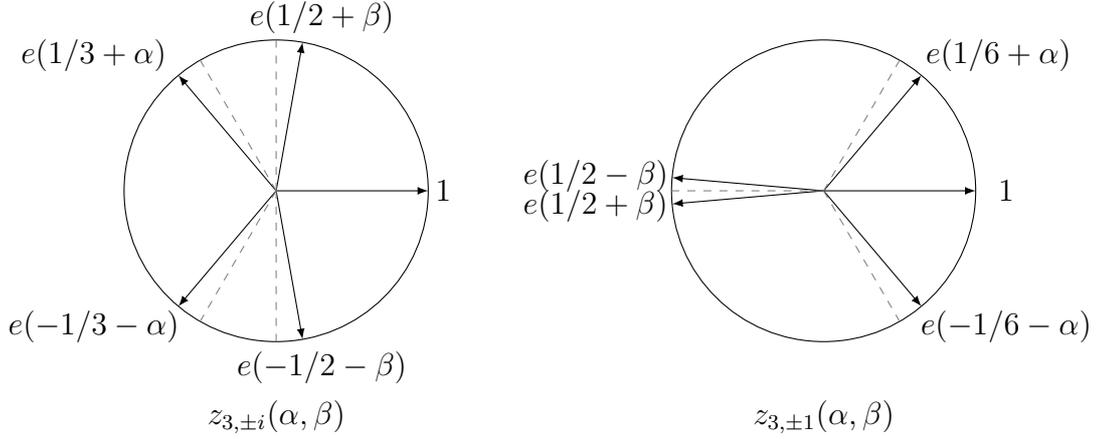

The first configuration behaves similarly to $z_5(\alpha, \beta)$, so could be used in its place to prove Theorems~\ref{thm:four-thirds} (given a suitable version of Lemma~\ref{fibonacci}) and~\ref{thm:beat2}, using close rational approximations of $\sqrt 3/2$ instead of $\phi$:
\begin{align*}
z_{3,\pm i}(\alpha,\beta) &=  1 + e(1/3 + \alpha) + e(-1/3-\alpha) + e(1/4 + \beta) + e(-1/4-\beta)
\\ & = 1 + 2c(1/3 + \alpha) + 2c(1/4 + \beta)
\\ & = 1 + 2[c(1/3)c(\alpha) - s(1/3)s(\alpha)] - 2s(\beta)
\\ & = 1-\cos(2\pi\alpha) - \sqrt 3 \sin(2\pi\alpha) - 2\sin(2\pi\beta)
\\ & = -2\pi[\sqrt 3 \alpha + 2 \beta] + 2\pi^2 \alpha^2 + O(\alpha^3) + O(\beta^3).
\end{align*}
The natural parameterisation here is $\alpha = a/3n$, $\beta = b/4n$ with $a \equiv -n \mod 3$ and $b \equiv -n \mod 4$.
Two examples of this type are shown in Figure~\ref{fig:11mod12}.

\begin{figure}
\input{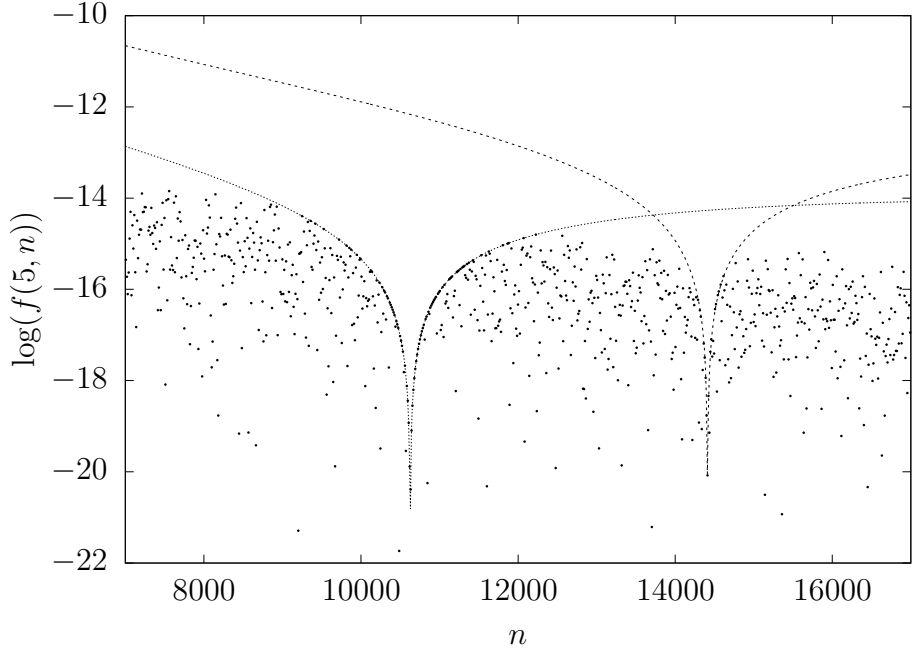}
\caption{$\log(f(5,n))$ for $7000 \leq n \leq 17000$, $n \equiv 11 \mod 12$ with upper bounds $\log|z_{3,\pm i}(13/3n,-15/4n)|$ and $\log|z_{3,\pm i}(-32/3n,37/4n)|$}
\label{fig:11mod12}
\end{figure}

The second configuration is rather different:
\begin{align}
z_{3,\pm 1}(\alpha,\beta) & = 1 + e(1/6 + \alpha) + e(-1/6-\alpha) + e(1/2 + \beta) + e(1/2-\beta)
\nonumber \\ & = 1 + 2c(1/6 + \alpha) + 2c(1/2 + \beta)
\nonumber \\ & = 1 + 2[c(1/6)c(\alpha) - s(1/6)s(\alpha)] - 2c(\beta)
\nonumber \\ & = 1 + \cos(2\pi\alpha) - \sqrt 3\sin(2\pi\alpha) - 2\cos(2\pi\beta)
\nonumber \\ & = -\frac{2\pi\sqrt 3a}{n} + \frac{2\pi^2(2b^2 - a^2)}{n^2}+ O(a^3/n^3) + O(b^4/n^4). \label{z3r}
\end{align}
When $a=0$ this degenerates to the configuration in Figure~\ref{fig:0mod6} witnessing the $O(1/n^2)$ bound for $n$ divisible by $6$.
Since $b$ does not appear in the linear term, there is no other way to make it small.
Instead we have to balance $an$ against $b^2$, so we become interested in the quadratic variant of Dirichlet's theorem.

The best bounds are due to Zaharescu.

\begin{theorem}[\cite{zaharescu}]\label{quadratic}
For every $Q \in \mathbb N$ and $\xi \in \mathbb R$, there are integers $p, q$ with $1 \leq q \leq Q$ such that
\[
\left|q^2\xi - p\right| \leq Q^{-4/7 + o(1)}.
\]
Moreover, there are infinitely many values of $p, q$ such that
\[
\left|q^2\xi - p\right| \leq q^{-2/3 + o(1)}.
\]
\end{theorem}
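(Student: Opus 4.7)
The plan is to derive both statements from the continued fraction expansion of $\xi$, treating the existence statement by direct construction and the uniform bound by building a pigeonhole/cancellation argument on top of that construction. Write $\xi = [a_0; a_1, a_2, \ldots]$ with convergents $p_n/q_n$, so that $q_{n+1} \geq a_{n+1} q_n$ and $|q_n \xi - p_n| \leq 1/q_{n+1}$.

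For the infinitely-often bound $|q^2 \xi - p| \leq q^{-2/3 + o(1)}$, I would split by the growth of the partial quotients. If $\xi$ admits infinitely many $n$ with $a_{n+1} \geq q_n^{1/3}$, then $|q_n^2 \xi - p_n q_n| \leq q_n/q_{n+1} \leq q_n^{2/3}$ directly, and a cleaner exponent drops out after replacing $q_n$ by a carefully chosen submultiple. Otherwise $\xi$ has all partial quotients eventually of polynomial size, so I would turn instead to products $q = mq_n$ with $m \leq q_n^{1/3}$. Applying the three-distance theorem to the sequence $\{m^2 q_n^2 \xi \bmod 1\}_{m \leq M}$ should produce an $m$ for which $\{q^2\xi\}$ lands in a gap of size $O(M^{-1})$, which again yields the target exponent after optimising.

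For the uniform bound $|q^2 \xi - p| \leq Q^{-4/7 + o(1)}$, the starting point is the easy Weyl-type $Q^{-1/2+o(1)}$ estimate: applying Dirichlet's theorem to the $Q$ fractional parts $\{q\xi\}$ produces $q_1, q_2 \leq Q$ with $\|(q_1-q_2)(q_1+q_2)\xi\| \leq 1/Q$, so some product $mn$ with $m,n \leq 2Q$ is close to an integer multiple of $1/\xi$. To extract $4/7$ instead of $1/2$, I would run this argument dyadically across many scales and then cross-reference the resulting factorisations by an additive-combinatorial double count: given many pairs $(m,n)$ producing small $\|mn\xi\|$, a Pl\"unnecke- or energy-type bound should force some pair with $m=n$, delivering a genuine square. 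The loss incurred in the double count, balanced against the scale parameter, is where the $4/7$ exponent emerges.

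The main obstacle in both halves is that no arithmetic structure on $\xi$ may be assumed, so the argument must handle badly approximable $\xi$ (where convergent denominators grow only geometrically) and very well-approximable $\xi$ simultaneously. The infinitely-often statement seems tractable by a careful case split on the $a_n$, but the uniform bound is genuinely harder: the naive pigeonhole gives only exponent $1/2$, and any improvement requires a robust additive-combinatorial input — this is the step I expect to be the real difficulty, and the one that makes Theorem~\ref{quadratic} a substantive result rather than a routine application of Dirichlet's theorem.
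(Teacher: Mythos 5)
This statement is not proved in the paper: it is quoted verbatim as a result of Zaharescu, cited as~\cite{zaharescu}, and invoked as a black box. There is therefore no ``paper's own proof'' to compare against. That said, your sketch has concrete gaps worth flagging.

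For the infinitely-often bound, the threshold in your case split is off by a factor: with $q=q_n$, $p=p_nq_n$, you get $|q_n^2\xi - p_nq_n| = q_n|q_n\xi-p_n| \le q_n/q_{n+1} \le 1/a_{n+1}$, so $a_{n+1}\ge q_n^{1/3}$ only delivers exponent $-1/3$, not $-2/3$; you would need $a_{n+1}\gtrsim q_n^{2/3}$, which is a far more restrictive hypothesis and makes the complementary case nearly universal. In that complementary case, the three-distance theorem is not applicable as you invoke it: it governs the linear sequence $\{m\theta\}_{m\le M}$, not the quadratic sequence $\{m^2 q_n^2\xi\}_{m\le M}$, and there is no analogue with comparably strong gap control for quadratic sequences. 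For the uniform bound, the step ``a Pl\"unnecke- or energy-type bound should force some pair with $m=n$'' is the crux and I do not see how to make it work: having many pairs $(m,n)$ with $\|mn\xi\|$ small constrains the set of \emph{products} $mn$ to lie near a structured set (roughly, multiples of continued-fraction denominators of $\xi$), but nothing in that structure forces it to meet the perfect squares, and additive energy of the pair set does not obviously control this. Zaharescu's actual argument is analytic rather than combinatorial, resting on estimates for incomplete Kloosterman-type exponential sums over residue classes arising from a Farey dissection; the exponent $4/7$ emerges from balancing those bounds, not from a double count of factorisations. Your instinct that the uniform statement is the genuinely hard half is correct, but the tool you reach for is the wrong one.

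Within the context of this paper, the appropriate treatment is simply to cite~\cite{zaharescu}, as the author does; supplying a full proof is well outside the scope of what the paper needs or attempts.
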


\begin{proposition}
For every $n$ divisible by $6$ there are integers $a, b$ such that
\[
0 < |z_{3,\pm 1}(a/n,b/n)| < n^{-11/8 + o(1)}.
\]
\end{proposition}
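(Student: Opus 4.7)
The plan is to exploit Theorem~\ref{quadratic} to cancel the leading linear and quadratic terms in \eqref{z3r}. Since $6 \mid n$, every integer pair $(a,b)$ yields a legitimate configuration of five $n$th roots of unity, so we have complete freedom to optimise over $a,b \in \mathbb{Z}$. Writing $\xi_n = 2\pi/(\sqrt{3}\, n)$, the first two terms of \eqref{z3r} combine as
\[
z_{3,\pm 1}(a/n,b/n) = -\tfrac{2\pi\sqrt{3}}{n}\bigl[a - \xi_n b^2\bigr] + O(a^2/n^2) + O(b^4/n^4),
\]
so the task reduces to making $|a - \xi_n b^2|$ small with $b$ as small as we can manage. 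This is exactly the quadratic-Dirichlet setting.

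I would apply the first (uniform) bound of Theorem~\ref{quadratic} with $\xi = \xi_n$ and a parameter $Q$ to be chosen, obtaining integers $p,q$ with $1 \leq q \leq Q$ and $|q^2 \xi_n - p| \leq Q^{-4/7 + o(1)}$, and then set $a = p$, $b = q$. Since $|p| = O(\xi_n q^2) = O(Q^2/n)$, the quadratic remainder $-2\pi^2 a^2/n^2$ and the higher-order term $O(b^4/n^4)$ are both of size $O(Q^4/n^4)$, while the main contribution is $O(Q^{-4/7 + o(1)}/n)$. Balancing $Q^{-4/7}/n$ against $Q^4/n^4$ by setting $Q = n^{21/32}$ gives $Q^{-4/7}/n = n^{-11/8}$, which yields the required bound $|z_{3,\pm 1}(a/n, b/n)| = O(n^{-11/8 + o(1)})$.

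It remains to ensure the resulting sum is nonzero. By the minimal-relation theorem stated at the start of this section, a vanishing sum of five $n$th roots of unity must be a rotation of a regular pentagon, which forces $(a,b)$ to a single specific rational value (essentially $a = n/30$, $b = -n/10$, requiring $30 \mid n$). Since Zaharescu's theorem provides a parameterised family of approximations as $Q$ varies, any collision with this isolated exceptional configuration can be resolved by nudging $Q$, and in fact the bound produced will still satisfy $|z| > 0$.

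The main technical obstacle I anticipate is verifying that the $o(1)$ in Theorem~\ref{quadratic} is genuinely uniform in $\xi$, so that applying it with the shrinking target $\xi_n$ is legitimate, and controlling how the integer $p$ returned by Zaharescu scales with $Q$ (needed for the $a^2/n^2$ bookkeeping). Once this is settled the algebraic optimisation is routine.
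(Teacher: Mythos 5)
Your core argument is the same as the paper's: rewrite the leading terms of \eqref{z3r} so that the quadratic Dirichlet-type theorem (Theorem~\ref{quadratic}) applies with $\xi = 2\pi/(\sqrt 3\,n)$, set $a$ and $b$ from the resulting $p,q$, observe that the remaining error terms are $O(Q^4/n^4)$, and balance against the main term $\Theta(Q^{-4/7+o(1)}/n)$ by taking $Q = n^{21/32+o(1)}$. The choice $a = p$ you make is the right sign (the published text writes $a=-p$, which appears to be a typo; it does not affect the magnitude of the bound). The caveat you raise about whether the $o(1)$ in Theorem~\ref{quadratic} is uniform in $\xi$, so that it may legitimately be applied to the shrinking target $2\pi/(\sqrt 3\, n)$, is a real technical point that the paper's proof also passes over silently.

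One concrete error: your nonvanishing discussion misremembers the classification of vanishing sums of five roots of unity. A vanishing sum of five $n$th roots is \emph{either} a rotation of a regular pentagon (possible only when $5\mid n$) \emph{or} a union of an antipodal pair and an equilateral triangle. The second alternative is exactly the family $z_{3,\pm 1}$ itself: $z_{3,\pm 1}(0,0) = 0$, so there is no ``single exceptional pentagon value'' to dodge. What does save the argument is that the theorem supplies $q \geq 1$, so $b\neq 0$, and then the exact conditions for the perturbed configuration to be a rotated pentagon or to re-form a triangle plus antipodal pair are rigid Diophantine constraints not met by the small $a,b$ produced. The paper does not spell out nonvanishing either, so this does not put you behind the published proof, but as written your explanation of why $z\neq 0$ is incorrect and should be replaced.
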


This is a worse bound than the $O(1/n^2)$ from Figure~\ref{thm:beat2}, but we present the analysis to illustrate what would be required to improve it.
For example, it improves on Theorem~\ref{thm:four-thirds} if it can be made compatible with the necessary congruence conditions when $n$ is not divisible by $6$.

\begin{proof}
Let $n$ be divisible by $6$ and write $\alpha = a/n$, $\beta = b/n$.
From \eqref{z3r},
\begin{align*}
z_{3,\pm 1}(\alpha,\beta) & = \frac{2\pi(2\pi b^2 - \sqrt 3an - \pi a^2)}{n^2} + O(a^3/n^3) + O(b^4/n^4).
\end{align*}

Let $1 \leq Q \leq n$ be an integer to be specified later.
By the first part of Theorem~\ref{quadratic} with $\xi = 2\pi/\sqrt 3 n$, there are $p, q$ with $1 \leq q \leq Q$ such that
\[
\left|2\pi q^2 - \sqrt 3 n p\right| \leq \sqrt 3 n Q^{-4/7 + o(1)}.
\]
Let $a = -p$, $b = q$.
Then 
\begin{align}
|z_{3,\pm 1}(\alpha,\beta)| & \leq 2\pi\sqrt 3/nQ^{4/7 + o(1)} + O(a^2/n^2)  + O(b^4/n^4) \nonumber
\\  & = 2\pi\sqrt 3/nQ^{4/7 + o(1)} + O(Q^4/n^4). \label{nQ47}
\end{align}
The optimal choice $Q = n^{21/32 + o(1)}$ gives $|z_{3,\pm 1}(\alpha,\beta)| = n^{-11/8 + o(1)}$.
\end{proof}

Unlike Dirichlet's theorem, which is tight in general (and for $\phi$ in particular), it is conjectured that the correct bound in Theorem~\ref{quadratic} is $Q^{-1+o(1)}$.
In that case the optimal choice of $Q$ in \eqref{nQ47} is $Q = n^{3/5 + o(1)}$, giving $|z_{3,\pm 1}(\alpha,\beta)| = n^{-8/5 + o(1)}$.

If we apply the second part of Theorem~\ref{quadratic} with $\xi = 2\pi/\sqrt 3$ we obtain infinitely many $p, q$ with 
\[
\left|2\pi q^2 - \sqrt 3 p\right| \leq \sqrt 3 q^{-2/3 + o(1)}.
\]
We can then take $a=-1$, $b=q$ and $n = p$ to obtain
\begin{align*}
|z_{3,\pm 1}(\alpha,\beta)| & \leq \sqrt 3/n^2q^{2/3 + o(1)} + O(1/n^2) + O(1/n^3) + O(q^4/n^4).
\end{align*}
Since $q = \Theta(\sqrt n)$, this is only an $O(n^{-2+o(1)})$ bound (infinitely often) since the error terms are dominating as in \eqref{eqn:swamp}, but it is by a different method from either Figure~\ref{fig:0mod6} or Theorem~\ref{thm:beat2} that may be more susceptible to improvement.

Finally, we mention that if we allow ourselves to go beyond $n$th roots of unity, there is a continuous family of real solutions to 
\[
1 + e(\alpha) + e(-\alpha) + e(\beta) + e(-\beta) = 1 + 2\cos(\alpha) + 2\cos(\beta) = 0.
\]
To take just one example, there is a $\theta$ such that $1 + 4\cos \theta = 0$.
Then there are integers $a, n$ such that $|\theta/2\pi - a/n| < 1/n^2$, whence $|1 + 4\cos(2\pi a/n)| = O(1/n^2)$ infinitely often.

\section{Computation}\label{sec:computation}

We end with some comments on computing $f(k,n)$.
The naive method is to fix one of the roots at $1$, then exhaust over all $\Theta(n^{k-1})$ choices for the other roots.
We can save two more powers of $n$ as follows.
Suppose that we have already chosen $k-2$ roots, with sum $y$.
Then the sum of roots $y + u + v$ is smallest when $u+v$ is closest to $-y$, a problem that can be solved in constant time using \eqref{eqn:sum-of-2} to solve for the arguments of $u$ and $v$ and trying a few different roundings.
This gives a final cost of $\Theta(n^{k-3})$ arithmetic operations to evaluate $f(k,n)$.
For $k=5$ this is a reasonable quadratic algorithm.
With a little more care we can cut down the constant factor.

\begin{proposition}
For $k=5$, it suffices to exhaust over optimal completions of the set
\[
\{1+e(a/n)+e(b/n) : 0 \leq 2a \leq b \leq 2n/5\}
\]
of approximately $n^2/25$ points.
\end{proposition}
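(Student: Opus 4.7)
My plan is to show that for every 5-configuration of $n$th roots of unity, some 3-element sub-configuration can be normalised (by rotation and reflection) to a pair $(a,b)$ in the stated triangle. Since the optimal completion of a triple returns a sum at most as large as any 5-sum extending it, and the minimum non-zero 5-sum is attained by some configuration, minimising optimal completions over the triangle recovers $f(5,n)$ exactly.

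The key geometric step is a pigeonhole observation: any five roots of unity contain three cyclically consecutive members lying in an arc of length at most $2\pi/5$. Writing $g_1,\ldots,g_5$ for the cyclic gaps between the roots (normalised so that $\sum g_i=1$), one has
\[
\sum_{i=1}^{5}(g_i+g_{i+1})=2\sum_i g_i=2,
\]
so some pair of adjacent gaps satisfies $g_i+g_{i+1}\leq 2/5$, and the three roots bounded by this pair fit in an arc of length $\leq 2\pi/5$. Rotating the earliest of these three to the position $1$ places the other two at $e(a/n)$ and $e(b/n)$ with $0\leq a\leq b\leq 2n/5$.

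It remains to arrange $2a\leq b$, which is just a matter of listing the smaller of the two interior gaps first. Complex conjugation sends the triple $\{1,e(a/n),e(b/n)\}$ to $\{1,e(-a/n),e(-b/n)\}$; after tracking where the unique large gap (of length $\geq 3n/5$) sits in the conjugated triple and re-rotating so that the earliest point in the small arc is again at $1$, the parameters transform as $(a,b)\mapsto(b-a,b)$. In particular $b$ is preserved, and since one of $a\leq b-a$ or $b-a\leq a$ must hold, reflecting if necessary produces a pair satisfying $2a\leq b\leq 2n/5$. The triangle $\{(a,b):0\leq 2a\leq b\leq 2n/5\}$ has area $\tfrac12\cdot\tfrac{2n}{5}\cdot\tfrac{n}{5}=n^2/25$, giving approximately that many integer points.

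The main obstacle is the pigeonhole step, though as shown above it reduces to a short averaging argument; the only other point requiring care is verifying that the reflection symmetry acts on the parameters by $(a,b)\mapsto(b-a,b)$, which comes down to identifying the large exterior gap after conjugation and re-choosing the earliest point in the arc to play the role of $1$.
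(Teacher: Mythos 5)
Your proof is correct and follows essentially the same route as the paper: reduce by rotation to a triple in a short arc, reduce by reflection to order the two internal gaps, and count lattice points in the resulting triangle. The paper phrases the first step as an appeal to ``random rotations,'' whereas you make it precise with the adjacent-gap averaging $\sum_i(g_i+g_{i+1})=2$; you also work out the reflection action $(a,b)\mapsto(b-a,b)$ explicitly, which the paper leaves implicit. These are minor expository improvements, not a different argument.
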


\begin{proof}
By considering random rotations, it suffices to look at triples of points chosen from any arc of slightly more than $2/5$ of the circle.
By reflecting if necessary we may assume that the gap between the second and third points is at least as large as the gap between the first and second points;
finally, by rotating clockwise as far as possible we may assume that each triple includes $1$.
\end{proof}

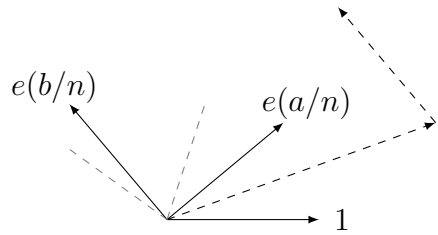
\begin{figure}
\begin{center}

\begin{tikzpicture}[scale=2]

\foreach \t in {0,130,40} 
  \draw[->] (0,0) -- (\t:1);
  
\draw[->,dashed] (0,0) -- ({1+cos(40)},{sin(40)});
\draw[->,dashed] ({1+cos(40)},{sin(40)}) -- ({1+cos(40)+cos(130)},{sin(40)+sin(130)});

\foreach \t in {144,72}   
  \draw[dashed,gray] (0,0) -- (\t:0.8);

\draw (0:1.15) node {$1$};
\draw (130:1.15) node {$e(b/n)$};
\draw (40:1.2) node {$e(a/n)$};

\end{tikzpicture}

\caption{$|1+e(a/n)+e(b/n)|$ is decreasing in $b$, since $e(b/n)$ always lies above $1+e(a/n)$.}
\label{fig:decreasing-b}
\end{center}
\end{figure}

There is a further saving available by observing that a significant fraction of these triple-sums will have length greater than $2$ plus the shortest length seen so far, so can't possibly have good completions.
Since $|1+e(a/n)+e(b/n)|$ is monotonically decreasing in $b$ (Figure~\ref{fig:decreasing-b}), we can easily exclude these triples for a further roughly one-third saving in running time.
Computing data for $n \leq 221000$ involved evaluating around $2^{47}$ sums over a period of several weeks.

For large $k$ it would be more efficient to generate the set $S$ of negatives of sums of $\lceil k/2 \rceil$ points (up to rotation) and check the sums of $\lfloor k/2 \rfloor$ points against sufficiently close elements of $S$.
This takes time at least $c_kn^{\lfloor k/2\rfloor}$, plus time and memory overhead for dealing with $S$.


\bibliography{roots}{}

\begin{thebibliography}{Mye86}

\bibitem[Adi15]{adiceam}
Faustin Adiceam.
\newblock Rational approximation and arithmetic progressions.
\newblock {\em Int. J. Number Theory}, 11(2):451--486, 2015.

\bibitem[Har49]{same-modulus}
S.~Hartman.
\newblock Sur une condition suppl\'{e}mentaire dans les approximations
  diophantiques.
\newblock {\em Colloq. Math.}, 2:48--51, 1949.

\bibitem[Har59]{no-extend-dirichlet}
S.~Hartman.
\newblock A feature of {D}irichlet's approximation theorem.
\newblock {\em Acta Arith.}, 5:261--263, 1959.

\bibitem[KL00]{konyagin-lev}
Sergei~V. Konyagin and Vsevolod~F. Lev.
\newblock On the distribution of exponential sums.
\newblock {\em Integers}, 0:A1, 11, 2000.

\bibitem[LL00]{centrifuge}
T.~Y. Lam and K.~H. Leung.
\newblock On vanishing sums of roots of unity.
\newblock {\em J. Algebra}, 224(1):91--109, 2000.

\bibitem[Mye86]{myerson}
Gerald Myerson.
\newblock Unsolved {P}roblems: {H}ow {S}mall {C}an a {S}um of {R}oots of
  {U}nity {B}e?
\newblock {\em Amer. Math. Monthly}, 93(6):457--459, 1986.

\bibitem[Shu01]{pte}
Chen Shuwen.
\newblock {The Prouhet--Tarry--Escott problem}.
\newblock \url{http://euler.free.fr/eslp/TarryPrb.htm}, March 2001.

\bibitem[Tao10]{MO}
Terry Tao.
\newblock How small can a sum of a few roots of unity be?
\newblock MathOverflow, 2010.
\newblock https://mathoverflow.net/q/46068 (version: 2010-11-14).

\bibitem[Zah95]{zaharescu}
Alexandru Zaharescu.
\newblock Small values of {$n^2\alpha\pmod 1$}.
\newblock {\em Invent. Math.}, 121(2):379--388, 1995.

\end{thebibliography}
\bibliographystyle{alpha}

\end{document}